\newtheorem{theorem}{Theorem}[section]
\newtheorem{corollary}[theorem]{Corollary}
\newtheorem{definition}[theorem]{Definition}
\numberwithin{equation}{section}
\newenvironment{proof}{\par\noindent{\bf Proof.}}{$\square$\par\bigskip}
\begin{document}

\title{\textbf{Bi-periodic Fibonacci matrix polynomial and its binomial
transforms}}
\author{\texttt{Arzu Coskun} and \texttt{Necati Taskara}}
\date{Department of Mathematics, Faculty of Science,\\
Selcuk University, Campus, 42075, Konya - Turkey \\
[0.3cm] \textit{arzucoskun58@gmail.com} and \textit{ntaskara@selcuk.edu.tr}}
\maketitle

\begin{abstract}
In this paper, we consider the matrix polynomial obtained by using
bi-periodic Fibonacci matrix polynomial. Then, we give some properties and
binomial transforms of the new matrix polynomials.

\textit{Keywords:} bi-periodic Fibonacci matrix polynomial, bi-periodic
Fibonacci matrix sequence, Binet formula, generating function, transform.

\textit{Mathematics Subject Classification:} 11B37; 11B39; 15A24.
\end{abstract}

\section{Introduction and Preliminaries}

The bi-periodic Fibonacci $\left\{ q_{n}\right\} _{n\in \mathbb{N}}$
sequence is defined by%
\begin{equation}
q_{n}=\left\{ 
\begin{array}{c}
aq_{n-1}+q_{n-2},\ \ \text{if }n\text{ is even} \\ 
bq_{n-1}+q_{n-2},\ \ \text{if }n\text{ is odd}\ 
\end{array}%
\right. \ ,  \label{1.1}
\end{equation}%
where $q_{0}=0,\ q_{1}=1$ and $a,b$ are nonzero real numbers$.$

Also, the bi-periodic Fibonacci $\left\{ \mathcal{F}_{n}\right\} _{n\in 
\mathbb{N}}$ matrix sequence is given as%
\begin{equation}
\mathcal{F}_{n}=\left( 
\begin{array}{cc}
\left( \frac{b}{a}\right) ^{\varepsilon (n)}q_{n+1} & \frac{b}{a}q_{n} \\ 
q_{n} & \left( \frac{b}{a}\right) ^{\varepsilon (n)}q_{n-1}%
\end{array}%
\right) \,,  \label{1.2}
\end{equation}%
where $a,b$ are nonzero real numbers and%
\begin{equation}
\varepsilon (n)=\left\{ 
\begin{array}{c}
1,\ n\text{ odd} \\ 
0,\ n\text{ even}%
\end{array}%
\right. .  \label{epsilon}
\end{equation}

In addition to these sequences, the other sequences appear in many branches
of science and have attracted the attention of mathematicians (see \cite%
{Bilgici}-\cite{Edson},\cite{Horadam}-\cite{YazlikTaskara} and the
references cited therein).

Also, the polynomials have attracted the attention of some mathematicians 
\cite{Plaza,Hoggatt,YilmazCoskunTaskara}. In \cite{YilmazCoskunTaskara}, the
authors gave the bi-periodic Fibonacci polynomial as%
\begin{equation}
q_{n}\left( a,b,x\right) =\left\{ 
\begin{array}{c}
axq_{n-1}\left( a,b,x\right) +q_{n-2}\left( a,b,x\right) ,\ \ \text{if }n%
\text{ is even} \\ 
bxq_{n-1}\left( a,b,x\right) +q_{n-2}\left( a,b,x\right) ,\ \ \text{if }n%
\text{ is odd}\ 
\end{array}%
\right.  \label{1.3}
\end{equation}%
which $q_{0}\left( a,b,x\right) =0,\ q_{1}\left( a,b,x\right) =1$ and $a,b$
are nonzero real numbers and they obtained some properties of this
polynomial. Hoggatt and Bicknell, in \cite{Hoggatt}, defined the Fibonacci,
Tribonacci, Quadranacci, $r$-bonacci polynomials. They generalized Fibonacci
polynomials and their\ relationship to diagonals of Pascal's triangle. In 
\cite{Plaza}, they give $k$-Fibonacci polynomials and offered the
derivatives of these polynomials in the form of convolution of $k$-Fibonacci
polynomials.

While on the one hand the sequences and polynomials was defined, on the
other hand it was introduced some transorms for the given sequences.
Binomial transform, $k$-Binomial transform, rising and fallling binomial
transforms are a few of these transforms (see \cite%
{FalconPlaza,YilmazTaskara}).

In this study, firstly, we introduce bi-periodic Fibonacci matrix polynomial
and give some properties of this polynomial. In Section 3, we have the new
matrix polynomial by using bi-periodic Fibonacci matrix polynomial. And, we
get the binomial, $k$-binomial, rising and falling\ transforms for the
matrix polynomial as the first time in the literature. Then, we give the
recurrence relations, generating functions and Binet formulas for these
generalized Binomial transforms.

\section{The bi-periodic Fibonacci matrix polynomial}

In this section, we focus on the bi-periodic matrix polynomial and give some
properties of this generalized polynomial. Hence, we firstly define the
bi-periodic Fibonacci matrix polynomials.

\begin{definition}
\label{def2.1} For $n\in \mathbb{N}$ and any two nonzero real numbers $a,b,$
the bi-periodic Fibonacci matrix polynomial $\left( \mathcal{F}_{n}\left(
a,b,x\right) \right) $ is defined by%
\begin{equation}
\mathcal{F}_{n}\left( a,b,x\right) =\left\{ 
\begin{array}{c}
ax\mathcal{F}_{n-1}+\mathcal{F}_{n-2},\ \ \text{if }n\text{ is even} \\ 
bx\mathcal{F}_{n-1}+\mathcal{F}_{n-2},\ \ \text{if }n\text{ is odd}\ 
\end{array}%
\right.   \label{2.1}
\end{equation}%
with initial conditions $\mathcal{F}_{0}\left( a,b,x\right) =\left( 
\begin{array}{cc}
1 & 0 \\ 
0 & 1%
\end{array}%
\right) $ and $\mathcal{F}_{1}\left( a,b,x\right) =\left( 
\begin{array}{cc}
bx & \frac{b}{a} \\ 
1 & 0%
\end{array}%
\right) .$
\end{definition}

\vskip0.4cm

In Definition $\ref{def2.1},$ the matrix $\mathcal{F}_{1}$ is analogue to
the Fibonacci $Q$-matrix which exists for Fibonacci numbers.

\begin{theorem}
\label{teo2.2}Let $\mathcal{F}_{n}\left( a,b,x\right) $ be as in (\ref{2.1}%
). Then the following equalities are valid for all positive integers:
\end{theorem}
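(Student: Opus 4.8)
The plan is to establish the explicit closed form
\begin{equation*}
\mathcal{F}_{n}\left( a,b,x\right) =\left(
\begin{array}{cc}
\left( \frac{b}{a}\right) ^{\varepsilon (n)}q_{n+1}\left( a,b,x\right) &
\frac{b}{a}q_{n}\left( a,b,x\right) \\
q_{n}\left( a,b,x\right) & \left( \frac{b}{a}\right) ^{\varepsilon
(n)}q_{n-1}\left( a,b,x\right)
\end{array}
\right) ,
\end{equation*}
which is the matrix-polynomial analogue of the entrywise description (\ref{1.2}), the scalar entries now being the bi-periodic Fibonacci polynomials of (\ref{1.3}). I would prove it by strong induction on $n$, splitting the inductive step according to the parity of $n$ so as to match simultaneously the two-branch matrix recurrence (\ref{2.1}) and the two-branch scalar recurrence (\ref{1.3}).

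For the base of the induction I would check $n=0$ and $n=1$ directly against the initial data of Definition \ref{def2.1}. Using $\varepsilon (0)=0$, $\varepsilon (1)=1$ together with $q_{-1}=1$, $q_{0}=0$, $q_{1}=1$ and $q_{2}\left( a,b,x\right) =ax$, the proposed form returns the identity matrix for $n=0$ and exactly the prescribed $\mathcal{F}_{1}\left( a,b,x\right) $ for $n=1$, the nontrivial check being $\left( \frac{b}{a}\right) ^{\varepsilon (1)}q_{2}=\frac{b}{a}\cdot ax=bx$.

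For the inductive step with $n$ even I would substitute the induction hypotheses for $\mathcal{F}_{n-1}$ (odd, hence carrying the factor $\frac{b}{a}$) and for $\mathcal{F}_{n-2}$ (even) into $\mathcal{F}_{n}=ax\mathcal{F}_{n-1}+\mathcal{F}_{n-2}$ and compare the four entries. The decisive simplification is $ax\cdot \frac{b}{a}=bx$: in the $(1,1)$ entry it turns $ax\cdot \frac{b}{a}q_{n}+q_{n-1}$ into $bxq_{n}+q_{n-1}$, which is precisely $q_{n+1}$ because $n+1$ is odd and so obeys the $bx$-branch of (\ref{1.3}); the remaining three entries reduce in the same way to $\frac{b}{a}q_{n}$, $q_{n}$ and $q_{n-1}$. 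The case $n$ odd is the mirror image, using the coefficient $bx$ together with the fact that $\mathcal{F}_{n-1}$ is now even, so that the even branch $q_{n}=axq_{n-1}+q_{n-2}$ of (\ref{1.3}) supplies the required recurrences.

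The one point that requires care, and the main obstacle, is the interaction between the parity-dependent prefactor $\left( \frac{b}{a}\right) ^{\varepsilon (n)}$ and the alternating recurrence coefficients $ax$ and $bx$. Since $\varepsilon $ flips at each step, the factor $\frac{b}{a}$ sits on exactly one of $\mathcal{F}_{n-1}$, $\mathcal{F}_{n-2}$, and it is precisely the product of this factor with the leading coefficient of (\ref{2.1}) that converts an $a$-indexed recurrence into a $b$-indexed one, and conversely. Keeping the index shifts on the $q_{m}\left( a,b,x\right) $ matched with the correct parity branch of (\ref{1.3}) throughout both cases is the only genuinely delicate bookkeeping; once the identity $ax\cdot \frac{b}{a}=bx$ is applied systematically, every entry collapses to the claimed form and the induction closes.
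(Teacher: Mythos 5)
Your induction for part $(i)$ is correct and is exactly the argument the paper compresses into the single phrase ``by using the iteration'': base cases $n=0,1$, then a parity-split inductive step in which the identity $ax\cdot\frac{b}{a}=bx$ converts the $ax$-branch of (\ref{2.1}) applied to the odd-indexed matrix into the $bx$-branch of (\ref{1.3}) for the scalar entries, and symmetrically for odd $n$. Your entrywise bookkeeping (which branch of (\ref{1.3}) governs $q_{n+1}$, $q_n$, $q_{n-1}$ in each of the four positions) is the only delicate point, and you handle it correctly.

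However, the theorem asserts two equalities, and your proposal proves only the first. Part $(ii)$, namely $\det(\mathcal{F}_{n}(a,b,x))=\left(-\frac{b}{a}\right)^{\varepsilon(n)}$, is not addressed at all. It does not follow formally from $(i)$ alone: expanding the determinant of the matrix in $(i)$ gives $\left(\frac{b}{a}\right)^{2\varepsilon(n)}q_{n+1}q_{n-1}-\frac{b}{a}q_{n}^{2}$, and reducing this to $\left(-\frac{b}{a}\right)^{\varepsilon(n)}$ requires the Cassini identity for the bi-periodic Fibonacci polynomials. You may either quote that identity from \cite{YilmazCoskunTaskara} and finish by the computation just indicated, or prove $(ii)$ by its own parity-split induction; but note that the paper immediately \emph{uses} $(ii)$ to re-derive Cassini, so if you take the first route you must cite Cassini as an external input rather than as a consequence of this theorem, on pain of circularity.
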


\begin{enumerate}
\item[$(i)$] $\mathcal{F}_{n}\left( a,b,x\right) =\left( 
\begin{array}{cc}
\left( \frac{b}{a}\right) ^{\varepsilon (n)}q_{n+1}\left( a,b,x\right) & 
\frac{b}{a}q_{n}\left( a,b,x\right) \\ 
q_{n}\left( a,b,x\right) & \left( \frac{b}{a}\right) ^{\varepsilon
(n)}q_{n-1}\left( a,b,x\right)%
\end{array}%
\right) ,$

\item[$(ii)$] $\det (\mathcal{F}_{n}\left( a,b,x\right) )=\left( -\frac{b}{a}%
\right) ^{\varepsilon (n)},$
\end{enumerate}

where $q_{n}\left( a,b,x\right) $ is $n$th bi-periodic Fibonacci polynomial.

\begin{proof}
By using the iteration, it can be obtained the desired results.
\end{proof}

We obtained the Cassini identity for bi-periodic Fibonacci polynomials \cite%
{YilmazCoskunTaskara}. Using the determinant of $\mathcal{F}_{n}\left(
a,b,x\right) $ in Theorem \ref{teo2.2}, again we get%
\begin{equation*}
a^{1-\varepsilon (n)}b^{\varepsilon (n)}q_{n+1}\left( a,b,x\right)
q_{n-1}\left( a,b,x\right) -a^{\varepsilon (n)}b^{1-\varepsilon
(n)}q_{n}^{2}\left( a,b,x\right) =a\left( -1\right) ^{n}.
\end{equation*}

\begin{theorem}
\label{teo2.3} For bi-periodic Fibonacci matrix polynomial, we have the
generating function%
\begin{equation*}
\sum\limits_{i=0}^{\infty }\mathcal{F}_{i}\left( a,b,x\right) t^{i}=\dfrac{1%
}{1-\left( abx^{2}+2\right) t^{2}+t^{4}}\left( 
\begin{array}{cc}
1+bxt-t^{2} & \frac{b}{a}t+bxt^{2}-\frac{b}{a}t^{3} \\ 
t+axt^{2}-t^{3} & 1-(abx^{2}+1)t^{2}+bxt^{3}%
\end{array}%
\right) .
\end{equation*}
\end{theorem}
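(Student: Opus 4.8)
The plan is to derive a single fourth-order recurrence that the matrix polynomials satisfy regardless of the parity of the index, and then feed that recurrence into the standard generating-function machinery. This route is self-contained and makes the shape of the denominator transparent.

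First I would show that, for every $n\geq 4$,
\[
\mathcal{F}_n(a,b,x) = (abx^2+2)\,\mathcal{F}_{n-2}(a,b,x) - \mathcal{F}_{n-4}(a,b,x).
\]
This is the heart of the argument and the step where the parity must be handled with care. Suppose $n$ is even. Applying (\ref{2.1}) twice gives $\mathcal{F}_n = ax\mathcal{F}_{n-1}+\mathcal{F}_{n-2}$ and $\mathcal{F}_{n-1}=bx\mathcal{F}_{n-2}+\mathcal{F}_{n-3}$, so that $\mathcal{F}_n = abx^2\mathcal{F}_{n-2}+ax\mathcal{F}_{n-3}+\mathcal{F}_{n-2}$. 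Since $n-2$ is even, (\ref{2.1}) also yields $ax\mathcal{F}_{n-3}=\mathcal{F}_{n-2}-\mathcal{F}_{n-4}$, and substituting this collapses the expression to the claimed identity. When $n$ is odd the roles of $a$ and $b$ are interchanged, but the cross term $abx^2$ is symmetric in $a,b$ and the computation reduces to the very same relation. This symmetry is precisely why a parity-free recurrence exists, and why the denominator $1-(abx^2+2)t^2+t^4$ depends on $a,b$ only through the product $ab$. I expect this first step to be the only genuine obstacle.

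Next I would set $G(t)=\sum_{i=0}^{\infty}\mathcal{F}_i(a,b,x)\,t^i$, multiply the recurrence by $t^n$, and sum over $n\geq 4$. Shifting the index by $2$ and by $4$ in the two terms on the right and accounting for the low-order summands excluded by the range $n\geq 4$ produces
\[
\big(1-(abx^2+2)t^2+t^4\big)G(t) = \mathcal{F}_0 + \mathcal{F}_1 t + \mathcal{F}_2 t^2 + \mathcal{F}_3 t^3 - (abx^2+2)t^2\big(\mathcal{F}_0+\mathcal{F}_1 t\big),
\]
where $\mathcal{F}_0,\mathcal{F}_1$ are the initial matrices of Definition \ref{def2.1} and the boundary terms arise from the shifts.

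Finally I would compute the remaining seeds directly from (\ref{2.1}), namely $\mathcal{F}_2 = ax\mathcal{F}_1+\mathcal{F}_0$ and $\mathcal{F}_3 = bx\mathcal{F}_2+\mathcal{F}_1$, substitute them into the right-hand side above, and collect powers of $t$ entrywise. In each of the four entries the $t^3$ coefficient coming from $\mathcal{F}_3$ cancels exactly against the $-(abx^2+2)t^3\mathcal{F}_1$ contribution, leaving a cubic polynomial matrix whose entries are $1+bxt-t^2$, $\tfrac{b}{a}t+bxt^2-\tfrac{b}{a}t^3$, $t+axt^2-t^3$, and $1-(abx^2+1)t^2+bxt^3$. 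Dividing through by $1-(abx^2+2)t^2+t^4$ then gives the stated generating function. Once the uniform recurrence is established, this last stage is a routine bookkeeping of coefficients.
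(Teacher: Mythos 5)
Your proposal is correct, and it takes a genuinely different (and arguably cleaner) route than the paper. The paper first multiplies $G(t)$ by $1-bxt-t^{2}$, which kills the odd-indexed terms but leaves a residual $(a-b)xt\sum_{i\geq 1}\mathcal{F}_{2i-1}t^{2i-1}$ because the even-index step uses $ax$ rather than $bx$; it then introduces an auxiliary generating function $g(t)$ for the odd-indexed subsequence, derives the recurrence $\mathcal{F}_{2i+1}=(abx^{2}+2)\mathcal{F}_{2i-1}-\mathcal{F}_{2i-3}$ to solve for $g(t)$, and back-substitutes. You instead establish the parity-free fourth-order recurrence $\mathcal{F}_{n}=(abx^{2}+2)\mathcal{F}_{n-2}-\mathcal{F}_{n-4}$ for all $n\geq 4$ (your parity check is sound: in the even case $ax\mathcal{F}_{n-3}=\mathcal{F}_{n-2}-\mathcal{F}_{n-4}$, in the odd case $bx\mathcal{F}_{n-3}=\mathcal{F}_{n-2}-\mathcal{F}_{n-4}$, and the cross term $abx^{2}$ is symmetric), after which the standard shift argument yields the numerator $\mathcal{F}_{0}+\mathcal{F}_{1}t+\bigl(\mathcal{F}_{2}-(abx^{2}+2)\mathcal{F}_{0}\bigr)t^{2}+\bigl(\mathcal{F}_{3}-(abx^{2}+2)\mathcal{F}_{1}\bigr)t^{3}$ in one pass; since $\mathcal{F}_{2}-(abx^{2}+2)\mathcal{F}_{0}=ax\mathcal{F}_{1}-(abx^{2}+1)\mathcal{F}_{0}$ and $\mathcal{F}_{3}-(abx^{2}+2)\mathcal{F}_{1}=bx\mathcal{F}_{0}-\mathcal{F}_{1}$, this is exactly the paper's final numerator, and the four entries you list are the correct ones. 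What your approach buys is the avoidance of the two-stage manipulation and the auxiliary function $g(t)$; what the paper's buys is that the intermediate factor $1-bxt-t^{2}$ makes visible how the bi-periodicity obstructs a single second-order generating-function identity. One small imprecision: the $t^{3}$ coefficient $\mathcal{F}_{3}-(abx^{2}+2)\mathcal{F}_{1}$ equals $bx\mathcal{F}_{0}-\mathcal{F}_{1}$, which vanishes only in the $(1,1)$ entry, so your phrase ``cancels exactly in each of the four entries'' is literally true only there; but the final entries you write down are the correct ones, so nothing in the argument is affected.
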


\begin{proof}
Assume that $G(t)$ is the generating function for the polynomial $\left\{ 
\mathcal{F}_{n}\left( a,b,x\right) \right\} _{n\in \mathbb{N}}$. Then, we
can write%
\begin{eqnarray*}
\left( 1-bxt-t^{2}\right) G\left( t\right) &=&\mathcal{F}_{0}\left(
a,b,x\right) +t\left( \mathcal{F}_{1}\left( a,b,x\right) -bx\mathcal{F}%
_{0}\left( a,b,x\right) \right) \\
&&+\sum\limits_{i=2}^{\infty }\left( \mathcal{F}_{i}\left( a,b,x\right) -bx%
\mathcal{F}_{i-1}\left( a,b,x\right) -\mathcal{F}_{i-2}\left( a,b,x\right)
\right) t^{i}.
\end{eqnarray*}

Since $\mathcal{F}_{2i+1}\left( a,b,x\right) =bx\mathcal{F}_{2i}\left(
a,b,x\right) +\mathcal{F}_{2i-1}\left( a,b,x\right) $, we get%
\begin{eqnarray*}
\left( 1-bxt-t^{2}\right) G\left( t\right) &=&\mathcal{F}_{0}\left(
a,b,x\right) +t\left( \mathcal{F}_{1}\left( a,b,x\right) -bx\mathcal{F}%
_{0}\left( a,b,x\right) \right) \\
&&+\sum\limits_{i=1}^{\infty }\left( \mathcal{F}_{2i}\left( a,b,x\right) -bx%
\mathcal{F}_{2i-1}\left( a,b,x\right) -\mathcal{F}_{2i-2}\left( a,b,x\right)
\right) t^{2i} \\
&=&\mathcal{F}_{0}\left( a,b,x\right) +t\left( \mathcal{F}_{1}\left(
a,b,x\right) -bx\mathcal{F}_{0}\left( a,b,x\right) \right) \\
&&+\left( a-b\right) xt\sum\limits_{i=1}^{\infty }\mathcal{F}_{2i-1}\left(
a,b,x\right) t^{2i-1}.
\end{eqnarray*}

Now, let%
\begin{equation*}
g(t)=\sum\limits_{i=1}^{\infty }\mathcal{F}_{2i-1}\left( a,b,x\right)
t^{2i-1}.
\end{equation*}

Since%
\begin{eqnarray*}
\mathcal{F}_{2i+1}\left( a,b,x\right) &=&bx\mathcal{F}_{2i}\left(
a,b,x\right) +\mathcal{F}_{2i-1}\left( a,b,x\right) \\
&=&bx(ax\mathcal{F}_{2i-1}\left( a,b,x\right) +\mathcal{F}_{2i-2}\left(
a,b,x\right) )+\mathcal{F}_{2i-1}\left( a,b,x\right) \\
&=&(abx^{2}+1)\mathcal{F}_{2i-1}\left( a,b,x\right) +bx\mathcal{F}%
_{2i-2}\left( a,b,x\right) \\
&=&(abx^{2}+1)\mathcal{F}_{2i-1}\left( a,b,x\right) +\mathcal{F}%
_{2i-1}\left( a,b,x\right) -\mathcal{F}_{2i-3}\left( a,b,x\right) \\
&=&(abx^{2}+2)\mathcal{F}_{2i-1}\left( a,b,x\right) -\mathcal{F}%
_{2i-3}\left( a,b,x\right) ,
\end{eqnarray*}

we have%
\begin{eqnarray*}
\left( 1-\left( abx^{2}+2\right) t^{2}+t^{4}\right) g\left( t\right) &=&%
\mathcal{F}_{1}\left( a,b,x\right) t+\mathcal{F}_{3}\left( a,b,x\right)
t^{3}-(abx^{2}+2)\mathcal{F}_{1}\left( a,b,x\right) t^{3} \\
&&+\sum\limits_{i=3}^{\infty }\left\{ 
\begin{array}{c}
\mathcal{F}_{2i-1}\left( a,b,x\right) -(abx^{2}+2)\mathcal{F}_{2i-3}\left(
a,b,x\right) \\ 
\text{ \ \ \ \ \ }+\mathcal{F}_{2i-5}\left( a,b,x\right)%
\end{array}%
\right\} t^{2i-1}.
\end{eqnarray*}

Therefore,%
\begin{eqnarray*}
g\left( t\right) &=&\frac{\mathcal{F}_{1}\left( a,b,x\right) t+\mathcal{F}%
_{3}\left( a,b,x\right) t^{3}-(abx^{2}+2)\mathcal{F}_{1}\left( a,b,x\right)
t^{3}}{1-\left( abx^{2}+2\right) t^{2}+t^{4}} \\
&=&\frac{\mathcal{F}_{1}\left( a,b,x\right) t+(bx\mathcal{F}_{0}\left(
a,b,x\right) -\mathcal{F}_{1}\left( a,b,x\right) )t^{3}}{1-\left(
abx^{2}+2\right) t^{2}+t^{4}}
\end{eqnarray*}%
and as a result, we get%
\begin{equation*}
G\left( t\right) =\frac{%
\begin{array}{c}
\mathcal{F}_{0}\left( a,b,x\right) +t\mathcal{F}_{1}\left( a,b,x\right)
+t^{2}\left( ax\mathcal{F}_{1}\left( a,b,x\right) -\mathcal{F}_{0}\left(
a,b,x\right) -abx^{2}\mathcal{F}_{0}\left( a,b,x\right) \right) \\ 
\text{ \ \ \ \ \ \ \ \ \ \ \ \ \ }+t^{3}\left( bx\mathcal{F}_{0}\left(
a,b,x\right) -\mathcal{F}_{1}\left( a,b,x\right) \right)%
\end{array}%
}{1-\left( abx^{2}+2\right) t^{2}+t^{4}}.
\end{equation*}%
which is desired equality.
\end{proof}

\begin{theorem}
\label{teo2.4} For every $n\in 
\mathbb{N}
,$ we write the Binet formula for the bi-periodic Fibonacci matrix
polynomial as the form%
\begin{equation*}
\mathcal{F}_{n}\left( a,b,x\right) =A_{1}\left( \alpha ^{n}-\beta
^{n}\right) +B_{1}\left( \alpha ^{2\left\lfloor \frac{n}{2}\right\rfloor
+2}-\beta ^{2\left\lfloor \frac{n}{2}\right\rfloor +2}\right) ,
\end{equation*}%
where%
\begin{eqnarray*}
A_{1} &=&\dfrac{\left\{ {\mathcal{F}}_{1}\left( a,b,x\right) -bx\mathcal{F}%
_{0}\left( a,b,x\right) \right\} ^{\varepsilon (n)}\left\{ 
\begin{array}{c}
ax{\mathcal{F}}_{1}\left( a,b,x\right) -\mathcal{F}_{0}\left( a,b,x\right)
\\ 
-abx^{2}\mathcal{F}_{0}\left( a,b,x\right)%
\end{array}%
\right\} ^{1-\varepsilon (n)}}{\left( ab\right) ^{\left\lfloor \frac{n}{2}%
\right\rfloor }\left( \alpha -\beta \right) x^{2\left\lfloor \frac{n}{2}%
\right\rfloor }},~ \\
B_{1} &=&\dfrac{\left( b\right) ^{\varepsilon (n)}\mathcal{F}_{0}\left(
a,b,x\right) }{\left( ab\right) ^{\left\lfloor \frac{n}{2}\right\rfloor
+1}\left( \alpha -\beta \right) x^{n+2\varepsilon (n+1)}},\text{ \ \ }%
\varepsilon (n)=n-2\left\lfloor \frac{n}{2}\right\rfloor
\end{eqnarray*}%
and $\alpha ,\beta $ are roots of $r^{2}-abx^{2}r-abx^{2}=0$ equation.
\end{theorem}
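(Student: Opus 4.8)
The plan is to extract the Binet formula directly from the generating function of Theorem~\ref{teo2.3}, since its denominator already carries the constant-coefficient structure governing the two index-parity subsequences. Write $G(t)=N(t)/D(t)$ with $D(t)=1-(abx^{2}+2)t^{2}+t^{4}$ and matrix numerator coefficients $N_{0}=\mathcal{F}_{0}$, $N_{1}=\mathcal{F}_{1}$, $N_{2}=ax\mathcal{F}_{1}-(1+abx^{2})\mathcal{F}_{0}$, $N_{3}=bx\mathcal{F}_{0}-\mathcal{F}_{1}$ (all evaluated at $(a,b,x)$). Because $D$ is a function of $t^{2}$ alone, the even and odd powers of $t$ in $G(t)$ decouple: $\mathcal{F}_{2i}$ comes from $(N_{0}+N_{2}t^{2})/D$ and $\mathcal{F}_{2i+1}$ from $t(N_{1}+N_{3}t^{2})/D$. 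I would treat these two subsequences separately and only at the end merge them through $\lfloor n/2\rfloor$ and $\varepsilon(n)$.

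The first step is to tie $D$ to the quadratic $r^{2}-abx^{2}r-abx^{2}=0$. Putting $u=t^{2}$ and factoring $1-(abx^{2}+2)u+u^{2}=(1-\gamma u)(1-\delta u)$, the Vieta relations $\alpha+\beta=abx^{2}$, $\alpha\beta=-abx^{2}$ give $\alpha^{2}+\beta^{2}=(abx^{2})^{2}+2abx^{2}$ and $(\alpha\beta)^{2}=(abx^{2})^{2}$, so that $\gamma=\alpha^{2}/(abx^{2})$ and $\delta=\beta^{2}/(abx^{2})$. Two consequences are essential: first, $\gamma^{i}=\alpha^{2i}/(abx^{2})^{i}$ converts powers of $\gamma,\delta$ into powers of $\alpha,\beta$, which explains the factors $(ab)^{\lfloor n/2\rfloor}x^{2\lfloor n/2\rfloor}$ in the denominators of $A_{1},B_{1}$; second, $\gamma-\delta=(\alpha^{2}-\beta^{2})/(abx^{2})=\alpha-\beta$, which is the source of the factor $\alpha-\beta$. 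A partial-fraction expansion in $u$ then yields, writing $s_{k}:=\alpha^{k}-\beta^{k}$, the expression $\mathcal{F}_{2i}=\frac{1}{\alpha-\beta}\big[N_{2}\,(abx^{2})^{-i}s_{2i}+N_{0}\,(abx^{2})^{-i-1}s_{2i+2}\big]$ together with the analogous formula for $\mathcal{F}_{2i+1}$ having $N_{1},N_{3}$ in place of $N_{0},N_{2}$.

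For even $n=2i$ the powers $s_{2i},s_{2i+2}$ already match $\alpha^{n}-\beta^{n}$ and $\alpha^{2\lfloor n/2\rfloor+2}-\beta^{2\lfloor n/2\rfloor+2}$, so reading off the coefficients gives $A_{1}=N_{2}/[(abx^{2})^{i}(\alpha-\beta)]$ and $B_{1}=N_{0}/[(abx^{2})^{i+1}(\alpha-\beta)]$, which are exactly the stated constants at $\varepsilon(n)=0$. The odd case is not immediate, because $\mathcal{F}_{2i+1}$ initially appears as a combination of the even powers $s_{2i},s_{2i+2}$, whereas the target involves the odd power $s_{2i+1}=\alpha^{n}-\beta^{n}$. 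Here I would use the defining relation $\alpha^{2}=abx^{2}(\alpha+1)$ (and its $\beta$ twin), equivalently $s_{2i}=(abx^{2})^{-1}(s_{2i+2}-abx^{2}s_{2i+1})$, to trade $s_{2i}$ for $s_{2i+1}$ and $s_{2i+2}$. After this substitution the telescoping simplifications $N_{1}+N_{3}=bx\mathcal{F}_{0}$ and $-N_{3}=\mathcal{F}_{1}-bx\mathcal{F}_{0}$ emerge, producing $A_{1}=(\mathcal{F}_{1}-bx\mathcal{F}_{0})/[(abx^{2})^{i}(\alpha-\beta)]$ and $B_{1}=bx\mathcal{F}_{0}/[(abx^{2})^{i+1}(\alpha-\beta)]$, i.e.\ the stated constants at $\varepsilon(n)=1$.

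The main obstacle is exactly this parity asymmetry: the even subsequence is already aligned with the desired powers while the odd one must be realigned using the quadratic relation, and it is this single extra reduction that forces the mixed factor $\{\mathcal{F}_{1}-bx\mathcal{F}_{0}\}^{\varepsilon(n)}\{ax\mathcal{F}_{1}-\mathcal{F}_{0}-abx^{2}\mathcal{F}_{0}\}^{1-\varepsilon(n)}$ in $A_{1}$ and the factor $b^{\varepsilon(n)}$ in $B_{1}$. The most delicate point is the exponent of $x$ in $B_{1}$: the even case contributes $x^{2i+2}$ and the odd case $x^{2i+1}$, and one must check that both are captured uniformly by $x^{\,n+2\varepsilon(n+1)}$ (using $\varepsilon(n+1)=1-\varepsilon(n)$). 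Confirming that every power of $ab$ and $x$, together with the sign-bearing factor $b$, lands precisely as written—so that the separate even and odd formulas collapse into one expression with $\varepsilon(n)$ inserted—is the piece demanding careful, if routine, bookkeeping; once it is done, the unified Binet formula follows.
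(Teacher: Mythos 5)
Your proposal is correct and follows essentially the same route as the paper: both extract the Binet formula by partial-fraction decomposition of the generating function of Theorem~\ref{teo2.3} (using $\alpha+1=\alpha^{2}/(abx^{2})$, $(\alpha+1)(\beta+1)=1$ to convert the pole data into powers of $\alpha,\beta$) and then read off coefficients, merging the two parities via $\varepsilon(n)$ and $\lfloor n/2\rfloor$. Your splitting into even and odd subseries in $u=t^{2}$ and the realignment $s_{2i+2}=abx^{2}(s_{2i+1}+s_{2i})$ are only a reorganization of the same computation the paper performs with $\alpha\beta=-abx^{2}$, and your final constants agree with the stated $A_{1}$ and $B_{1}$.
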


\begin{proof}
Using the partial fraction decomposition, we can rewrite $G\left( t\right) $
as%
\begin{equation*}
G\left( t\right) =\frac{1}{\alpha -\beta }\left\{ 
\begin{array}{c}
\frac{%
\begin{array}{c}
t\left\{ \beta \left( \mathcal{F}_{1}\left( a,b,x\right) -bx\mathcal{F}%
_{0}\left( a,b,x\right) \right) -bx\mathcal{F}_{0}\left( a,b,x\right)
\right\} \\ 
\text{ \ \ \ \ \ }+\beta \left( abx^{2}\mathcal{F}_{0}\left( a,b,x\right) +%
\mathcal{F}_{0}\left( a,b,x\right) -ax\mathcal{F}_{1}\left( a,b,x\right)
\right) \\ 
+abx^{2}\mathcal{F}_{0}\left( a,b,x\right) -ax\mathcal{F}_{1}\left(
a,b,x\right)%
\end{array}%
}{t^{2}-\left( \beta +1\right) }\text{ } \\ 
\text{\ \ \ } \\ 
+\frac{%
\begin{array}{c}
t\left\{ \alpha \left( bx\mathcal{F}_{0}\left( a,b,x\right) -\mathcal{F}%
_{1}\left( a,b,x\right) \right) +bx\mathcal{F}_{0}\left( a,b,x\right)
\right\} \\ 
+\alpha \left( ax\mathcal{F}_{1}\left( a,b,x\right) -\mathcal{F}_{0}\left(
a,b,x\right) -abx^{2}\mathcal{F}_{0}\left( a,b,x\right) \right) \\ 
+ax\mathcal{F}_{1}\left( a,b,x\right) -abx^{2}\mathcal{F}_{0}\left(
a,b,x\right)%
\end{array}%
}{t^{2}-\left( \alpha +1\right) }%
\end{array}%
\right\} .
\end{equation*}

Since the Maclaurin series expansion of the function $\frac{A-Bt}{t^{2}-C}$
is given by%
\begin{equation*}
\frac{A-Bt}{t^{2}-C}=\overset{\infty }{\underset{n=0}{\sum }}%
BC^{-n-1}t^{2n+1}-\overset{\infty }{\underset{n=0}{\sum }}AC^{-n-1}t^{2n},
\end{equation*}

the generating function $G\left( t\right) $ can also be expressed as%
\begin{equation*}
G(t)=\frac{1}{\alpha -\beta }\left\{ 
\begin{array}{c}
\overset{\infty }{\underset{n=0}{\sum }}\frac{%
\begin{array}{c}
\left\{ 
\begin{array}{c}
\alpha \left( \mathcal{F}_{1}\left( a,b,x\right) -bx\mathcal{F}_{0}\left(
a,b,x\right) \right) \\ 
-bx\mathcal{F}_{0}\left( a,b,x\right)%
\end{array}%
\right\} \left( \beta +1\right) ^{n+1}\text{ \ \ \ \ \ \ \ \ } \\ 
+\left\{ 
\begin{array}{c}
\beta \left( bx\mathcal{F}_{0}\left( a,b,x\right) -\mathcal{F}_{1}\left(
a,b,x\right) \right) \\ 
+bx\mathcal{F}_{0}\left( a,b\right)%
\end{array}%
\right\} \left( \alpha +1\right) ^{n+1}%
\end{array}%
}{\left( \alpha +1\right) ^{n+1}\left( \beta +1\right) ^{n+1}}t^{2n+1}\text{
\ \ \ \ \ \ \ \ \ \ \ \ } \\ 
\\ 
-\overset{\infty }{\underset{n=0}{\sum }}\frac{%
\begin{array}{c}
\alpha \left( ax\mathcal{F}_{1}\left( a,b,x\right) -\mathcal{F}_{0}\left(
a,b,x\right) -abx^{2}\mathcal{F}_{0}\left( a,b,x\right) \right) \\ 
+ax\mathcal{F}_{1}\left( a,b,x\right) -abx^{2}\mathcal{F}_{0}\left(
a,b,x\right)%
\end{array}%
}{\left( \alpha +1\right) ^{n+1}\left( \beta +1\right) ^{n+1}}\left( \beta
+1\right) ^{n+1}t^{2n} \\ 
\\ 
-\overset{\infty }{\underset{n=0}{\sum }}\frac{%
\begin{array}{c}
\beta \left( abx^{2}\mathcal{F}_{0}\left( a,b,x\right) +\mathcal{F}%
_{0}\left( a,b,x\right) -ax\mathcal{F}_{1}\left( a,b,x\right) \right) \\ 
+abx^{2}\mathcal{F}_{0}\left( a,b,x\right) -ax\mathcal{F}_{1}\left(
a,b,x\right)%
\end{array}%
}{\left( \alpha +1\right) ^{n+1}\left( \beta +1\right) ^{n+1}}\left( \alpha
+1\right) ^{n+1}t^{2n}%
\end{array}%
\right\} .
\end{equation*}

Thus, we obtain%
\begin{eqnarray*}
G(t) &=&\frac{1}{\alpha -\beta }\overset{\infty }{\underset{n=0}{\sum }}%
\left( \frac{1}{abx^{2}}\right) ^{n+1}\left\{ 
\begin{array}{c}
-abx^{2}\left( \mathcal{F}_{1}\left( a,b,x\right) -bx\mathcal{F}_{0}\left(
a,b,x\right) \right) \beta ^{2n+1}\text{ \ \ \ \ \ \ \ \ } \\ 
-abx^{2}\left( bx\mathcal{F}_{0}\left( a,b,x\right) -\mathcal{F}_{1}\left(
a,b,x\right) \right) \alpha ^{2n+1}\text{ \ \ \ \ \ } \\ 
-bx\mathcal{F}_{0}\left( a,b,x\right) \beta ^{2n+2}+bx\mathcal{F}_{0}\left(
a,b,x\right) \alpha ^{2n+2}%
\end{array}%
\right\} t^{2n+1} \\
&&+\frac{1}{\alpha -\beta }\overset{\infty }{\underset{n=0}{\sum }}\left( 
\frac{1}{abx^{2}}\right) ^{n+1}\left\{ 
\begin{array}{c}
-abx^{2}\left( 
\begin{array}{c}
ax\mathcal{F}_{1}\left( a,b,x\right) -\mathcal{F}_{0}\left( a,b,x\right) \\ 
-abx^{2}\mathcal{F}_{0}\left( a,b,x\right)%
\end{array}%
\right) \beta ^{2n}\text{ \ \ \ \ \ \ } \\ 
-abx^{2}\left( 
\begin{array}{c}
abx^{2}\mathcal{F}_{0}\left( a,b,x\right) +\mathcal{F}_{0}\left( a,b,x\right)
\\ 
-ax\mathcal{F}_{1}\left( a,b,x\right)%
\end{array}%
\right) \alpha ^{2n} \\ 
-\mathcal{F}_{0}\left( a,b,x\right) \beta ^{2n+2}+\mathcal{F}_{0}\left(
a,b,x\right) \alpha ^{2n+2}%
\end{array}%
\right\} t^{2n}.
\end{eqnarray*}

Combining the sums, we get%
\begin{equation*}
G(t)=\overset{\infty }{\underset{n=0}{\sum }}\left\{ 
\begin{array}{c}
\left\{ 
\begin{array}{c}
\mathcal{F}_{1}\left( a,b,x\right) \\ 
\text{ \ \ \ }-bx\mathcal{F}_{0}\left( a,b,x\right)%
\end{array}%
\right\} ^{\varepsilon (n)}\left\{ 
\begin{array}{c}
-abx^{2}\mathcal{F}_{0}\left( a,b,x\right) \\ 
+ax\mathcal{F}_{1}\left( a,b,x\right) \\ 
-\mathcal{F}_{0}\left( a,b,x\right)%
\end{array}%
\right\} ^{1-\varepsilon (n)}\left( \frac{\alpha ^{n}-\beta ^{n}}{\left(
abx^{2}\right) ^{\left\lfloor \frac{n}{2}\right\rfloor }\left( \alpha -\beta
\right) }\right) \\ 
+\left( bx\right) ^{\varepsilon (n)}\mathcal{F}_{0}\left( a,b,x\right)
\left( \frac{\alpha ^{2\left( \left\lfloor \frac{n}{2}\right\rfloor
+1\right) }-\beta ^{2\left( \left\lfloor \frac{n}{2}\right\rfloor +1\right) }%
}{\left( abx^{2}\right) ^{\left\lfloor \frac{n}{2}\right\rfloor +1}\left(
\alpha -\beta \right) }\right)%
\end{array}%
\right\} t^{n}.
\end{equation*}

Therefore, for all $n\geq 0$, from the definition of generating function, we
have%
\begin{equation*}
\mathcal{F}_{n}\left( a,b,x\right) =A_{1}\left( \alpha ^{n}-\beta
^{n}\right) +B_{1}\left( \alpha ^{2\left\lfloor \frac{n}{2}\right\rfloor
+2}-\beta ^{2\left\lfloor \frac{n}{2}\right\rfloor +2}\right) ,
\end{equation*}

which is desired.
\end{proof}

Now, for bi-periodic Fibonacci matrix polynomial, we give the some \textit{%
summations by} considering its Binet formula.

\begin{corollary}
\label{teo2.5} For $k\geq 0$, the following statements are true:
\end{corollary}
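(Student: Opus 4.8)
The plan is to substitute the Binet formula of Theorem \ref{teo2.4} into each summation and reduce everything to finite geometric series in the powers of $\alpha$ and $\beta$. The only genuine complication is that the coefficients $A_{1}$ and $B_{1}$ carry the parity indicator $\varepsilon (n)$ together with the floor $\left\lfloor \frac{n}{2}\right\rfloor $, so a single running index $n$ cannot be summed in one stroke. The natural first move is therefore to split each sum according to the parity of the summation index, which is exactly what the $\left\lfloor \frac{n}{2}\right\rfloor $ and $\varepsilon (n)$ appearing in $A_1,B_1$ are designed to accommodate.

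For the even block I would set $n=2i$, so that $\varepsilon (2i)=0$ and $\left\lfloor \frac{2i}{2}\right\rfloor =i$; this collapses $A_{1}$ and $B_{1}$ to their even-index values and leaves the summand as a fixed combination of $\alpha ^{2i}$ and $\beta ^{2i}$. Symmetrically, for the odd block I would put $n=2i+1$, giving $\varepsilon =1$ and $\left\lfloor \frac{2i+1}{2}\right\rfloor =i$, with a summand built from $\alpha ^{2i+1}$ and $\beta ^{2i+1}$. Each block is then a true geometric sum, evaluated term by term via $\sum_{i=0}^{k}r^{i}=\frac{r^{k+1}-1}{r-1}$ with $r=\alpha ^{2}$ or $r=\beta ^{2}$, so the infinite machinery of the generating function is not needed here.

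After the geometric series are summed, the main obstacle is the algebraic simplification, and this is where I expect the real work to lie rather than in any single geometric evaluation. I would invoke that $\alpha ,\beta $ are the roots of $r^{2}-abx^{2}r-abx^{2}=0$, hence $\alpha +\beta =abx^{2}$ and $\alpha \beta =-abx^{2}$, to clear the denominators $\alpha ^{2}-1$, $\beta ^{2}-1$, and $\alpha -\beta $ in a balanced way. These symmetric-function relations are what force the mixed $\alpha $--$\beta $ cross terms to recombine so that the $\frac{1}{\alpha -\beta }$ prefactor cancels and the result becomes rational in $a,b,x$. Finally, I would recognise the surviving differences $\alpha ^{m}-\beta ^{m}$ and $\alpha ^{2\left\lfloor \frac{m}{2}\right\rfloor +2}-\beta ^{2\left\lfloor \frac{m}{2}\right\rfloor +2}$ once more as Binet expressions and fold them back into the matrices $\mathcal{F}_{m}\left( a,b,x\right) $, thereby recovering the closed forms asserted in parts $(i)$ and $(ii)$. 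The bookkeeping that keeps the two parity blocks aligned, not any isolated computation, is the step that demands the most care.
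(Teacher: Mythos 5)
Your overall strategy --- substitute the Binet formula of Theorem \ref{teo2.4}, split the summation index by parity so that $\varepsilon$ and $\left\lfloor \cdot /2\right\rfloor$ become constant on each block, sum the resulting geometric series, and use $\alpha+\beta=abx^{2}$, $\alpha\beta=-abx^{2}$ to simplify --- is exactly the route the paper gestures at (it offers no written proof beyond the phrase ``by considering its Binet formula''), and it does work for parts $(i)$ and $(ii)$. Two points need repair, though. First, the corollary has a part $(iii)$, the infinite sum $\sum_{k=0}^{\infty}\mathcal{F}_{k}\left( a,b,x\right) t^{-k}$, which your write-up never addresses; your remark that the ``infinite machinery of the generating function is not needed'' is only true of $(i)$ and $(ii)$. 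Part $(iii)$ is most cheaply obtained by substituting $t\mapsto t^{-1}$ into Theorem \ref{teo2.3} (equivalently, by summing infinite geometric series, which additionally requires a convergence hypothesis on $|t|$ that should be stated). Second, a technical slip: after the parity split the common ratio of the geometric series is not $\alpha^{2}$ (resp.\ $\beta^{2}$) but $\alpha^{2}/(abx^{2})=-\alpha/\beta$ (resp.\ $-\beta/\alpha$) for part $(i)$, and $\alpha^{2}/(abx^{2}t^{2})$ for part $(ii)$, because $A_{1}$ and $B_{1}$ themselves carry the factor $(abx^{2})^{-\left\lfloor n/2\right\rfloor}$ --- precisely the $n$-dependence you identified as the main complication and then dropped when naming the ratio. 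The series are still geometric, so the argument survives, but the bookkeeping you rightly warn about must include these factors for the $\frac{1}{\alpha-\beta}$ prefactor to cancel as claimed. For part $(i)$ alone there is also a shorter route: multiply the recurrence (\ref{2.1}) by $a$ or $b$ according to the parity of the index and telescope in steps of two, which avoids $\alpha,\beta$ entirely.
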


\begin{itemize}
\item[$(i)$] 
\begin{equation}
\sum\limits_{k=0}^{n-1}\mathcal{F}_{k}\left( a,b,x\right) =\dfrac{%
\begin{array}{c}
a^{\varepsilon \left( n\right) }b^{1-\varepsilon \left( n\right) }\mathcal{F}%
_{n}\left( a,b,x\right) +a^{1-\varepsilon \left( n\right) }b^{\varepsilon
\left( n\right) }\mathcal{F}_{n-1}\left( a,b,x\right) \\ 
-a\mathcal{F}_{1}\left( a,b,x\right) +abx\mathcal{F}_{0}\left( a,b,x\right)
-b\mathcal{F}_{0}\left( a,b,x\right)%
\end{array}%
}{abx},  \notag
\end{equation}

\item[$(ii)$] 
\begin{equation}
\sum\limits_{k=0}^{n}\mathcal{F}_{k}\left( a,b,x\right) t^{-k}=\frac{1}{%
1-(ab+2)t^{2}+t^{4}}\left\{ 
\begin{array}{c}
\dfrac{\mathcal{F}_{n-1}\left( a,b,x\right) }{t^{n-1}}-\dfrac{\mathcal{F}%
_{n+1}\left( a,b,x\right) }{t^{n-3}} \\ 
+\dfrac{\mathcal{F}_{n}\left( a,b,x\right) }{t^{n}}-\dfrac{\mathcal{F}%
_{n+2}\left( a,b,x\right) }{t^{n+2}} \\ 
+t^{4}\mathcal{F}_{0}\left( a,b,x\right) +t^{3}\mathcal{F}_{1}\left(
a,b,x\right) \\ 
-t^{2}\left[ \left( abx^{2}+1\right) \mathcal{F}_{0}\left( a,b,x\right) -ax%
\mathcal{F}_{1}\left( a,b,x\right) \right] \\ 
-t\left( \mathcal{F}_{1}\left( a,b,x\right) -bx\mathcal{F}_{0}\left(
a,b,x\right) \right)%
\end{array}%
\right\} ,  \notag
\end{equation}

\item[$(iii)$] 
\begin{equation*}
\sum\limits_{k=0}^{\infty }\mathcal{F}_{k}\left( a,b,x\right) t^{-k}=\frac{t%
}{1-(abx^{2}+2)t^{2}+t^{4}}\left( 
\begin{array}{cc}
t^{3}+bxt^{2}-t & \frac{b}{a}t^{2}+bxt-\frac{b}{a} \\ 
t^{2}+axt-1 & t^{3}-\left( abx^{2}+1\right) t+bx%
\end{array}%
\right) ,
\end{equation*}%
where $\alpha ,\beta $ are roots of $r^{2}-abx^{2}r-abx^{2}=0$ equation and $%
\varepsilon (n)=n-2\left\lfloor \frac{n}{2}\right\rfloor $.
\end{itemize}

\section{Binomial transforms for Fibonacci matrix polynomial}

In this section, we mainly focus on the new matrix polynomial that obtained
by using the bi-periodic Fibonacci matrix polynomial.

\begin{definition}
\label{def1} For $n\in \mathbb{N}$, the matrix polynomial $\left( \mathcal{A}%
_{n}\left( a,b,x\right) \right) $ obtained by using bi-periodic Fibonacci
matrix polynomial is defined by%
\begin{equation}
\mathcal{A}_{n}\left( a,b,x\right) =\sqrt{x}a^{\frac{\varepsilon (n)}{2}}b^{%
\frac{1-\varepsilon (n)}{2}}\mathcal{F}_{n}\left( a,b,x\right)   \label{1}
\end{equation}%
where $a,b$ are nonzero real numbers and $\varepsilon (n)=n-2\left\lfloor 
\frac{n}{2}\right\rfloor .$
\end{definition}

\vskip0.4cm

In the following, we introduce the binomial transform and $k$-binomial
transform of the this matrix polynomial.

\begin{definition}
\label{def2} For $n\in \mathbb{N}$, the binomial and $k$-binomial transforms
of the matrix polynomial $\left( \mathcal{A}_{n}\left( a,b,x\right) \right) $
are defined by%
\begin{equation}
b_{n}\left( a,b,x\right) =\overset{n}{\underset{i=0}{\dsum }}\left( 
\begin{array}{c}
n \\ 
i%
\end{array}%
\right) \mathcal{A}_{i}\left( a,b,x\right) ,  \label{2}
\end{equation}%
\begin{equation}
w_{n}\left( a,b,x\right) =\overset{n}{\underset{i=0}{\dsum }}\left( 
\begin{array}{c}
n \\ 
i%
\end{array}%
\right) k^{n}\mathcal{A}_{i}\left( a,b,x\right) ,  \label{3}
\end{equation}%
respectively, where $a,b$ are nonzero real numbers$.$
\end{definition}

\vskip0.4cm

\begin{description}
\item Throughout this section, we will take $k=x\sqrt{ab}.$
\end{description}

\vskip0.4cm

Now, we give some properties for the binomial transform of the matrix
polynomial $\left( \mathcal{A}_{n}\left( a,b,x\right) \right) $.

\begin{theorem}
\label{teo1} The binomial transform of the matrix polynomial $\left( 
\mathcal{A}_{n}\left( a,b,x\right) \right) $ verifies the following
relations:
\end{theorem}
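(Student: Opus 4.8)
The plan is to funnel the entire theorem through a single structural observation: the normalization (\ref{1}) strips the parity dependence off the recurrence (\ref{2.1}) and turns $\{\mathcal{A}_n(a,b,x)\}$ into a constant-coefficient Fibonacci-type sequence. Concretely, I would first solve (\ref{1}) for $\mathcal{F}_n$ and substitute into both branches of (\ref{2.1}). In the even branch one replaces $\mathcal{F}_{n-1}$ (odd index, carrying the factor $\sqrt{ax}$) and $\mathcal{F}_{n-2}$ (even index, carrying $\sqrt{bx}$), and the half-integer powers of $a$ and $b$ collapse so that the coefficient $ax$ becomes $x\sqrt{ab}$; the odd branch is symmetric and yields the same coefficient. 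This establishes, for all $n\ge 2$,
\begin{equation*}
\mathcal{A}_n(a,b,x)=k\,\mathcal{A}_{n-1}(a,b,x)+\mathcal{A}_{n-2}(a,b,x),\qquad k=x\sqrt{ab},
\end{equation*}
which is precisely why the constant $k=x\sqrt{ab}$ was fixed immediately before the theorem.

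With this in hand I would transfer the recurrence to $b_n(a,b,x)$ from (\ref{2}). The cleanest elementary route is Pascal's rule $\binom{n+1}{i}=\binom{n}{i}+\binom{n}{i-1}$: applying it to $b_{n+1}$ and reindexing the second sum expresses $b_{n+1}$ as $b_n$ plus the binomial transform of the once-shifted sequence $\{\mathcal{A}_{i+1}\}$. Applying Pascal once more to that shifted transform and then eliminating the shifted transforms by means of the $\mathcal{A}$-recurrence above yields a closed two-term recurrence purely in the $b_n$. Carrying the algebra through gives
\begin{equation*}
b_n(a,b,x)=(k+2)\,b_{n-1}(a,b,x)-k\,b_{n-2}(a,b,x),
\end{equation*}
together with the initial data $b_0=\mathcal{A}_0$ and $b_1=\mathcal{A}_0+\mathcal{A}_1$ read off directly from (\ref{2}). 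Any companion relations in the statement, such as a Pascal-type identity of the shape $b_{n+1}=\sum_{i=0}^n\binom{n}{i}\bigl(\mathcal{A}_i+\mathcal{A}_{i+1}\bigr)$, fall out of the same manipulation at the intermediate stage.

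As an alternative that avoids the double-sum bookkeeping, I would use the generating-function identity $B(t)=\tfrac{1}{1-t}\,\mathcal{G}\!\left(\tfrac{t}{1-t}\right)$, where $\mathcal{G}(t)=\sum_{n\ge0}\mathcal{A}_n(a,b,x)\,t^n=\dfrac{\mathcal{A}_0+(\mathcal{A}_1-k\mathcal{A}_0)t}{1-kt-t^2}$ is immediate from the constant-coefficient recurrence. Substituting $t/(1-t)$ and simplifying the denominator to $1-(k+2)t+kt^2$ reads off the same recurrence for $b_n$ at once; this form also makes the later generating function and Binet formula for $b_n$ essentially free, so I would likely develop both routes in parallel.

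The main obstacle I anticipate is Step 1: verifying that the even and the odd branch of (\ref{2.1}) collapse to the \emph{same} constant-coefficient recurrence. One has to track the three exponents $\varepsilon(n),\varepsilon(n-1),\varepsilon(n-2)$ simultaneously and check that the half-integer powers of $a$ and $b$ recombine to give exactly $x\sqrt{ab}$ in each parity case. Once this symmetry is secured, the remainder of the theorem is routine Pascal-rule algebra or a one-line generating-function substitution, with the $k$-binomial transform (\ref{3}) handled identically after absorbing the scalar $k^n$.
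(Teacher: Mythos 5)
Your proposal is correct, and its primary route (Pascal's rule applied to the definition of $b_n$, followed by elimination via the recurrence) is in substance the same computation the paper carries out for parts $(i)$ and $(ii)$. The genuine difference is organizational, and it is an improvement: you first isolate the lemma that the normalization $\mathcal{A}_{n}=\sqrt{x}\,a^{\varepsilon (n)/2}b^{(1-\varepsilon (n))/2}\mathcal{F}_{n}$ kills the parity dependence, giving the constant-coefficient recurrence
\begin{equation*}
\mathcal{A}_{n}\left( a,b,x\right) =x\sqrt{ab}\,\mathcal{A}_{n-1}\left( a,b,x\right) +\mathcal{A}_{n-2}\left( a,b,x\right),
\end{equation*}
and your parity check of the two branches does verify this. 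The paper never states this lemma; instead it drags the weights $a^{\varepsilon (i)/2}b^{(1-\varepsilon (i))/2}$ through every line of the Pascal-rule manipulation (its displayed identity $\sqrt{x}\,a^{(1-\varepsilon (i))/2}b^{\varepsilon (i)/2}\mathcal{F}_{i+1}=\mathcal{A}_{i+1}$ is exactly your lemma in disguise), which makes the computation harder to follow and produces at least one misplaced exponent in the statement of $(i)$. Your generating-function alternative, $B(t)=\tfrac{1}{1-t}\mathcal{G}\!\left( \tfrac{t}{1-t}\right) $ with denominator simplifying to $1-(x\sqrt{ab}+2)t+x\sqrt{ab}\,t^{2}$, is a route the paper does not take at all; it has the added benefit of actually delivering parts $(iii)$--$(v)$ (via the roots of $r^{2}-(x\sqrt{ab}+2)r+x\sqrt{ab}=0$ and the initial data $b_{0}=\mathcal{A}_{0}$, $b_{1}=\mathcal{A}_{0}+\mathcal{A}_{1}$), whereas the paper dismisses those with ``the others can be done in similar ways.'' So your plan, if written out, would be both tighter and more complete than the published proof.
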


\begin{enumerate}
\item[$(i)$] $b_{n+1}\left( a,b,x\right) =\overset{n}{\underset{i=0}{\dsum }}%
\left( 
\begin{array}{c}
n \\ 
i%
\end{array}%
\right) \sqrt{x}a^{\frac{\varepsilon (n)}{2}}b^{\frac{1-\varepsilon (n)}{2}%
}\left( \mathcal{F}_{i}\left( a,b,x\right) +a^{\frac{1-2\varepsilon (i)}{2}%
}b^{\frac{2\varepsilon (i)-1}{2}}\mathcal{F}_{i+1}\left( a,b,x\right)
\right) ,$

\item[$(ii)$] $b_{n+1}\left( a,b,x\right) =\left( x\sqrt{ab}+2\right)
b_{n}\left( a,b,x\right) -x\sqrt{ab}b_{n-1}\left( a,b,x\right) ,$

\item[$(iii)$] $b_{n}\left( a,b,x\right) =C\left(
r_{2}^{n}(x)-r_{1}^{n}(x)\right) +\frac{\sqrt{bx}\mathcal{F}_{0}\left(
a,b,x\right) }{2}\left( r_{2}^{n}(x)+r_{1}^{n}(x)\right) ,$

\item[$(iv)$] $b_{n}\left( t\right) =\frac{\sqrt{bx}\mathcal{F}_{0}\left(
a,b,x\right) +t\left( \sqrt{ax}\mathcal{F}_{1}\left( a,b,x\right) -\sqrt{bx}%
\left( 1+x\sqrt{ab}\right) \mathcal{F}_{0}\left( a,b,x\right) \right) }{%
1-\left( x\sqrt{ab}+2\right) t+x\sqrt{ab}t^{2}},$

\item[$(v)$] $\overset{\infty }{\underset{n=0}{\dsum }}\frac{b_{n}\left(
a,b,x\right) t^{n}}{n!}=C\left( e^{r_{2}(x)t}-e^{r_{1}(x)t}\right) +\frac{%
\sqrt{bx}}{2}\mathcal{F}_{0}\left( a,b,x\right) \left(
e^{r_{2}(x)t}+e^{r_{1}(x)t}\right) ,$
\end{enumerate}

where $r_{1}(x)$, $r_{2}(x)$ are roots of the\ $r^{2}-\left( x\sqrt{ab}%
+2\right) r+x\sqrt{ab}=0$ equation and $C=\frac{bx\sqrt{ax}\mathcal{F}%
_{0}\left( a,b\right) -2\sqrt{ax}\mathcal{F}_{1}\left( a,b\right) }{2\sqrt{%
abx^{2}+4}}$.

\begin{proof}
We will prove the first two equalities because the proof of the others can
be done in similar ways.

$(i)$ By considering the property of binomial numbers, we can write%
\begin{eqnarray*}
b_{n+1}\left( a,b,x\right) &=&\overset{n+1}{\underset{i=0}{\dsum }}\left( 
\begin{array}{c}
n+1 \\ 
i%
\end{array}%
\right) \mathcal{A}_{i}\left( a,b,x\right) =\overset{n+1}{\underset{i=0}{%
\dsum }}\left( 
\begin{array}{c}
n+1 \\ 
i%
\end{array}%
\right) \sqrt{x}a^{\frac{\varepsilon (i)}{2}}b^{\frac{1-\varepsilon (i)}{2}}%
\mathcal{F}_{i}\left( a,b,x\right) \\
&=&\overset{n+1}{\underset{i=0}{\dsum }}\left[ \left( 
\begin{array}{c}
n \\ 
i%
\end{array}%
\right) +\left( 
\begin{array}{c}
n \\ 
i-1%
\end{array}%
\right) \right] \sqrt{x}a^{\frac{\varepsilon (i)}{2}}b^{\frac{1-\varepsilon
(i)}{2}}\mathcal{F}_{i}\left( a,b,x\right) .
\end{eqnarray*}

If necessary arrengements are made, we have%
\begin{eqnarray*}
b_{n+1}\left( a,b,x\right) &=&\overset{n}{\underset{i=0}{\dsum }}\left( 
\begin{array}{c}
n \\ 
i%
\end{array}%
\right) \sqrt{x}a^{\frac{\varepsilon (i)}{2}}b^{\frac{1-\varepsilon (i)}{2}}%
\mathcal{F}_{i}\left( a,b,x\right) +\overset{n+1}{\underset{i=0}{\dsum }}%
\left( 
\begin{array}{c}
n \\ 
i-1%
\end{array}%
\right) \sqrt{x}a^{\frac{\varepsilon (i)}{2}}b^{\frac{1-\varepsilon (i)}{2}}%
\mathcal{F}_{i}\left( a,b,x\right) \\
&=&\overset{n}{\underset{i=0}{\dsum }}\left( 
\begin{array}{c}
n \\ 
i%
\end{array}%
\right) \sqrt{x}a^{\frac{\varepsilon (i)}{2}}b^{\frac{1-\varepsilon (i)}{2}}%
\mathcal{F}_{i}\left( a,b,x\right) +\overset{n}{\underset{i=0}{\dsum }}%
\left( 
\begin{array}{c}
n \\ 
i%
\end{array}%
\right) \sqrt{x}a^{\frac{1-\varepsilon (i)}{2}}b^{\frac{\varepsilon (i)}{2}}%
\mathcal{F}_{i+1}\left( a,b,x\right) \\
&=&\overset{n}{\underset{i=0}{\dsum }}\left( 
\begin{array}{c}
n \\ 
i%
\end{array}%
\right) a^{\frac{\varepsilon (n)}{2}}b^{\frac{1-\varepsilon (n)}{2}}\left( 
\sqrt{x}\mathcal{F}_{i}\left( a,b,x\right) +\sqrt{x}a^{\frac{1-2\varepsilon
(i)}{2}}b^{\frac{2\varepsilon (i)-1}{2}}\mathcal{F}_{i+1}\left( a,b,x\right)
\right) .
\end{eqnarray*}

Also, we can write as $b_{n+1}\left( a,b,x\right) =b_{n}\left( a,b,x\right) +%
\overset{n}{\underset{i=0}{\dsum }}\left( 
\begin{array}{c}
n \\ 
i%
\end{array}%
\right) \mathcal{A}_{i+1}\left( a,b,x\right) .$

$(ii)$ By using the equation $(i),$ we can write%
\begin{eqnarray*}
b_{n+1}\left( a,b,x\right) &=&\overset{n}{\underset{i=1}{\dsum }}\left( 
\begin{array}{c}
n \\ 
i%
\end{array}%
\right) \sqrt{x}a^{\frac{\varepsilon (i)}{2}}b^{\frac{1-\varepsilon (i)}{2}%
}\left( \mathcal{F}_{i}\left( a,b,x\right) +a^{\frac{1-2\varepsilon (i)}{2}%
}b^{\frac{2\varepsilon (i)-1}{2}}\mathcal{F}_{i+1}\left( a,b,x\right) \right)
\\
&&+\sqrt{bx}\mathcal{F}_{0}\left( a,b,x\right) +\sqrt{ax}\mathcal{F}%
_{1}\left( a,b,x\right) \\
&=&\left( x\sqrt{ab}+1\right) \overset{n}{\underset{i=1}{\dsum }}\left( 
\begin{array}{c}
n \\ 
i%
\end{array}%
\right) \sqrt{x}a^{\frac{\varepsilon (i)}{2}}b^{\frac{1-\varepsilon (i)}{2}}%
\mathcal{F}_{i}\left( a,b,x\right) \\
&&+\overset{n}{\underset{i=1}{\dsum }}\left( 
\begin{array}{c}
n \\ 
i%
\end{array}%
\right) \sqrt{x}a^{\frac{1-\varepsilon (i)}{2}}b^{\frac{\varepsilon (i)}{2}}%
\mathcal{F}_{i-1}\left( a,b,x\right) +\sqrt{bx}\mathcal{F}_{0}\left(
a,b,x\right) +\sqrt{ax}\mathcal{F}_{1}\left( a,b,x\right) \\
&=&\left( x\sqrt{ab}+1\right) b_{n}\left( a,b,x\right) +\overset{n}{\underset%
{i=1}{\dsum }}\left( 
\begin{array}{c}
n \\ 
i%
\end{array}%
\right) \sqrt{x}a^{\frac{1-\varepsilon (i)}{2}}b^{\frac{\varepsilon (i)}{2}}%
\mathcal{F}_{i-1}\left( a,b,x\right) \\
&&-x\sqrt{ab}\mathcal{F}_{0}\left( a,b,x\right) +\sqrt{ax}\mathcal{F}%
_{1}\left( a,b,x\right)
\end{eqnarray*}

Thus, we obtain%
\begin{eqnarray*}
b_{n}\left( a,b,x\right) &=&\left( x\sqrt{ab}+1\right) b_{n-1}\left(
a,b,x\right) +\overset{n-1}{\underset{i=1}{\dsum }}\left( 
\begin{array}{c}
n-1 \\ 
i%
\end{array}%
\right) \sqrt{x}a^{\frac{1-\varepsilon (i)}{2}}b^{\frac{\varepsilon (i)}{2}}%
\mathcal{F}_{i-1}\left( a,b,x\right) \\
&&-x\sqrt{ab}\mathcal{F}_{0}\left( a,b,x\right) +\sqrt{ax}\mathcal{F}%
_{1}\left( a,b,x\right) \\
&=&x\sqrt{ab}b_{n-1}\left( a,b,x\right) +\overset{n-1}{\underset{i=0}{\dsum }%
}\left( 
\begin{array}{c}
n-1 \\ 
i%
\end{array}%
\right) \sqrt{x}a^{\frac{\varepsilon (i)}{2}}b^{\frac{1-\varepsilon (i)}{2}}%
\mathcal{F}_{i}\left( a,b,x\right) \\
&&+\overset{n-1}{\underset{i=1}{\dsum }}\left( 
\begin{array}{c}
n-1 \\ 
i%
\end{array}%
\right) \sqrt{x}a^{\frac{1-\varepsilon (i)}{2}}b^{\frac{\varepsilon (i)}{2}}%
\mathcal{F}_{i-1}\left( a,b,x\right) -x\sqrt{ab}\mathcal{F}_{0}\left(
a,b,x\right) +\sqrt{ax}\mathcal{F}_{1}\left( a,b,x\right) \\
&=&x\sqrt{ab}b_{n-1}\left( a,b,x\right) +\overset{n}{\underset{i=1}{\dsum }}%
\left( 
\begin{array}{c}
n \\ 
i%
\end{array}%
\right) \sqrt{x}a^{\frac{1-\varepsilon (i)}{2}}b^{\frac{\varepsilon (i)}{2}}%
\mathcal{F}_{i-1}\left( a,b,x\right) \\
&&-x\sqrt{ab}\mathcal{F}_{0}\left( a,b,x\right) +\sqrt{ax}\mathcal{F}%
_{1}\left( a,b,x\right)
\end{eqnarray*}

And we get%
\begin{equation*}
b_{n+1}\left( a,b,x\right) =\left( x\sqrt{ab}+1\right) b_{n}\left(
a,b,x\right) +b_{n}\left( a,b,x\right) -x\sqrt{ab}b_{n-1}\left( a,b,x\right)
.
\end{equation*}
\end{proof}

From the definition of binomial and k-binomial transform, we obtain $%
w_{n}\left( a,b,x\right) =k^{n}b_{n}\left( a,b,x\right) =\left( x\sqrt{ab}%
\right) ^{n}b_{n}\left( a,b,x\right) .$ Thus, for every $n\in 
\mathbb{N}
,$ in the following equalities are true.

\begin{itemize}
\item $w_{n+1}\left( a,b,x\right) =\left( abx^{2}+2x\sqrt{ab}\right)
w_{n}\left( a,b,x\right) -abx^{3}\sqrt{ab}w_{n-1}\left( a,b,x\right) ,$

\item $w_{n}\left( t\right) =\frac{\sqrt{bx}\mathcal{F}_{0}\left(
a,b,x\right) +t\left( ax\sqrt{bx}\mathcal{F}_{1}\left( a,b,x\right) -\left(
bx\sqrt{ax}+abx^{2}\sqrt{bx}\right) \mathcal{F}_{0}\left( a,b,x\right)
\right) }{1-\left( abx^{2}+2x\sqrt{ab}\right) t+abx^{3}\sqrt{ab}t^{2}},$

\item $w_{n}\left( a,b,x\right) =C\left( r_{4}^{n}(x)-r_{3}^{n}(x)\right) +%
\frac{\sqrt{bx}\mathcal{F}_{0}\left( a,b,x\right) }{2}\left(
r_{4}^{n}(x)+r_{3}^{n}(x)\right) ,$
\end{itemize}

where $r_{3}(x)$ and $r_{4}(x)$ are roots of $r^{2}-\left( 2x\sqrt{ab}%
+abx^{2}\right) r-x^{3}ab\sqrt{ab}=0$\ equation and $C=\frac{bx\sqrt{ax}%
\mathcal{F}_{0}\left( a,b,x\right) -2\sqrt{ax}\mathcal{F}_{1}\left(
a,b,x\right) }{2\sqrt{abx^{2}+4}}.$

Now, we introduce the rising $k$-binomial transform of the matrix polynomial 
$\left( \mathcal{A}_{n}\left( a,b,x\right) \right) $.

\begin{definition}
\label{def4} For $n\in \mathbb{N}$, the rising $k$-binomial transform of the
matrix polynomial $\left( \mathcal{A}_{n}\left( a,b,x\right) \right) $ is
defined by%
\begin{equation}
r_{n}\left( a,b,x\right) =\overset{n}{\underset{i=0}{\dsum }}\left( 
\begin{array}{c}
n \\ 
i%
\end{array}%
\right) k^{i}\mathcal{A}_{i}\left( a,b,x\right)   \label{4}
\end{equation}%
where $a,b$ are nonzero real numbers.
\end{definition}

\vskip0.4cm

\begin{theorem}
\label{teo2} For every $n\in 
\mathbb{N}
,$ the rising $k$-binomial transform of the matrix polynomial $\left( 
\mathcal{A}_{n}\left( a,b,x\right) \right) $ is the polynomial $\left\{ 
\mathcal{A}_{2n}\left( a,b,x\right) \right\} $, that is%
\begin{equation}
r_{n}\left( a,b,x\right) =\mathcal{A}_{2n}\left( a,b,x\right) .  \label{2.9}
\end{equation}
\end{theorem}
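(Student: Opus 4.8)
The plan is to bypass the bi-periodic recurrence entirely by first showing that the renormalized polynomial $\mathcal{A}_n(a,b,x)$ obeys a single clean two-term recurrence, and then feeding its Binet-type formula into the rising $k$-binomial transform so that the binomial theorem collapses the sum. The value $k=x\sqrt{ab}$ is exactly what makes the collapse possible.

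First I would prove the auxiliary recurrence
\begin{equation*}
\mathcal{A}_{n}\left( a,b,x\right) =x\sqrt{ab}\,\mathcal{A}_{n-1}\left( a,b,x\right) +\mathcal{A}_{n-2}\left( a,b,x\right) ,\qquad n\geq 2.
\end{equation*}
This follows by substituting the definition $\mathcal{A}_{n}=\sqrt{x}\,a^{\varepsilon (n)/2}b^{(1-\varepsilon (n))/2}\mathcal{F}_{n}$ into the two cases of (\ref{2.1}) and cancelling the parity factors: when $n$ is even one has $\mathcal{A}_{n}=\sqrt{bx}\,\mathcal{F}_{n}=\sqrt{bx}\left( ax\mathcal{F}_{n-1}+\mathcal{F}_{n-2}\right)$, and rewriting $\mathcal{F}_{n-1}=\mathcal{A}_{n-1}/\sqrt{ax}$, $\mathcal{F}_{n-2}=\mathcal{A}_{n-2}/\sqrt{bx}$ produces exactly the coefficient $\sqrt{bx}\,\sqrt{ax}=x\sqrt{ab}$; the odd case is handled identically. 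Since the coefficients $x\sqrt{ab}$ and $1$ are scalars, the matrix sequence $\{\mathcal{A}_{n}\}$ inherits the characteristic equation $r^{2}-x\sqrt{ab}\,r-1=0$. Let $\gamma ,\delta$ denote its roots, so that $\gamma +\delta =x\sqrt{ab}$, $\gamma \delta =-1$, and $\gamma \neq \delta$ because the discriminant $abx^{2}+4$ is strictly positive.

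Next I would record the Binet-type representation $\mathcal{A}_{n}=P\gamma ^{n}+Q\delta ^{n}$, where the matrix coefficients $P=(\mathcal{A}_{1}-\delta \mathcal{A}_{0})/(\gamma -\delta )$ and $Q=(\gamma \mathcal{A}_{0}-\mathcal{A}_{1})/(\gamma -\delta )$ are fixed by the initial matrices $\mathcal{A}_{0},\mathcal{A}_{1}$. Substituting this into the definition (\ref{4}) with $k=x\sqrt{ab}$ and interchanging the finite sums gives
\begin{equation*}
r_{n}\left( a,b,x\right) =P\sum_{i=0}^{n}\binom{n}{i}\left( k\gamma \right) ^{i}+Q\sum_{i=0}^{n}\binom{n}{i}\left( k\delta \right) ^{i}=P\left( 1+k\gamma \right) ^{n}+Q\left( 1+k\delta \right) ^{n},
\end{equation*}
the last equality being the binomial theorem applied to each sum.

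The decisive step is the observation that $1+k\gamma$ and $1+k\delta$ are precisely $\gamma ^{2}$ and $\delta ^{2}$. Indeed, since $\gamma$ and $\delta$ are roots of $r^{2}-x\sqrt{ab}\,r-1=0$, we have $\gamma ^{2}=x\sqrt{ab}\,\gamma +1=1+k\gamma$ and likewise $\delta ^{2}=1+k\delta$, so that $r_{n}=P(\gamma ^{2})^{n}+Q(\delta ^{2})^{n}=P\gamma ^{2n}+Q\delta ^{2n}=\mathcal{A}_{2n}\left( a,b,x\right)$, which is (\ref{2.9}). The only genuine obstacle is the verification of the clean recurrence for $\mathcal{A}_{n}$: one must track the half-integer powers of $a$ and $b$ through both parity cases of (\ref{2.1}) and check that they recombine into the single symmetric factor $x\sqrt{ab}$ regardless of the parity of $n$. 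Once this normalization is confirmed, the remainder is a one-line application of the binomial theorem together with the defining quadratic for $\gamma ,\delta$.
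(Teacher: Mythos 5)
Your proof is correct, and it reaches the identity by a genuinely cleaner route than the paper's. The paper substitutes the bi-periodic Binet formula of Theorem \ref{teo2.4} for $\mathcal{F}_i(a,b,x)$ --- with its parity-dependent coefficients $A_1,B_1$ and floor-function exponents --- directly into the sum (\ref{4}) and then compresses all the resulting bookkeeping into the phrase ``making the necessary arrangements'' before landing on $\sqrt{bx}\left[ A_{1}\left( \alpha ^{2n}-\beta ^{2n}\right) +B_{1}\left( \alpha ^{2n+2}-\beta ^{2n+2}\right) \right]$. You instead isolate the real content in an auxiliary lemma: the normalization $\mathcal{A}_{n}=\sqrt{x}\,a^{\varepsilon (n)/2}b^{(1-\varepsilon (n))/2}\mathcal{F}_{n}$ converts the two-case recurrence (\ref{2.1}) into the single constant-coefficient recurrence $\mathcal{A}_{n}=x\sqrt{ab}\,\mathcal{A}_{n-1}+\mathcal{A}_{n-2}$ (your parity check that the factor is $\sqrt{ax}\sqrt{bx}=x\sqrt{ab}$ in both cases is right), after which the standard Binet form $\mathcal{A}_{n}=P\gamma ^{n}+Q\delta ^{n}$, the binomial theorem, and the observation $1+x\sqrt{ab}\,\gamma =\gamma ^{2}$ finish the argument in three transparent lines. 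What this buys is that the collapse of the binomial sum is forced by the defining quadratic of $\gamma ,\delta$ rather than verified by hand on a parity-split expression, and the same lemma explains why the characteristic polynomial $r^{2}-\left( x\sqrt{ab}+2\right) r+x\sqrt{ab}$ appears in Theorem \ref{teo1}. The only caveat worth recording is the implicit nondegeneracy assumption $abx^{2}+4\neq 0$ (so that $\gamma \neq \delta$ and $P,Q$ are defined); the paper makes the same tacit assumption, and for the polynomial identity (\ref{2.9}) it is harmless by continuity in $x$.
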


\begin{proof}
From the Theorem \ref{teo2.4}, we can write%
\begin{eqnarray*}
r_{n}\left( a,b,x\right) &=&\overset{n}{\underset{i=0}{\dsum }}\left( 
\begin{array}{c}
n \\ 
i%
\end{array}%
\right) \left( x\sqrt{ab}\right) ^{i}\sqrt{x}a^{\frac{\varepsilon (i)}{2}}b^{%
\frac{1-\varepsilon (i)}{2}}\mathcal{F}_{i}\left( a,b\right) \\
&=&\overset{n}{\underset{i=0}{\dsum }}\left( 
\begin{array}{c}
n \\ 
i%
\end{array}%
\right) \left( ab\right) ^{\frac{i}{2}}x^{i+\frac{1}{2}}a^{\frac{\varepsilon
(i)}{2}}b^{\frac{1-\varepsilon (i)}{2}}\left[ A_{1}\left( \alpha ^{i}-\beta
^{i}\right) +B_{1}\left( \alpha ^{2\left\lfloor \frac{i}{2}\right\rfloor
+2}-\beta ^{2\left\lfloor \frac{i}{2}\right\rfloor +2}\right) \right] .
\end{eqnarray*}

Consequently, making the necessarry arrangements, we have%
\begin{equation*}
=\sqrt{bx}\left[ A_{1}\left( \alpha ^{2n}-\beta ^{2n}\right) +B_{1}\left(
\alpha ^{2n+2}-\beta ^{2n+2}\right) \right] =\sqrt{bx}\mathcal{F}_{2n}\left(
a,b,x\right) =\mathcal{A}_{2n}\left( a,b,x\right) .
\end{equation*}
\end{proof}

\begin{theorem}
\label{teo3} For every $n\in 
\mathbb{N}
,$ the recurrence relation for rising $k$-binomial transform of the matrix
polynomial $\left( \mathcal{A}_{n}\left( a,b,x\right) \right) $,%
\begin{equation}
r_{n+1}\left( a,b,x\right) =\left( abx^{2}+2\right) r_{n}\left( a,b,x\right)
-r_{n-1}\left( a,b,x\right) ,  \label{2.10}
\end{equation}%
where $r_{0}=\sqrt{bx}\mathcal{F}_{0}\left( a,b,x\right) $ and $r_{1}=\sqrt{%
bx}\mathcal{F}_{0}\left( a,b,x\right) +ax\sqrt{bx}\mathcal{F}_{1}\left(
a,b,x\right) .$
\end{theorem}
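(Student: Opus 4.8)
The plan is to use Theorem~\ref{teo2} to convert the statement about $r_{n}(a,b,x)$ into a statement about the even-indexed matrix polynomials $\mathcal{A}_{2n}(a,b,x)$, and then verify the recurrence there. Since $r_{n}(a,b,x)=\mathcal{A}_{2n}(a,b,x)=\sqrt{bx}\,\mathcal{F}_{2n}(a,b,x)$ (using $\varepsilon(2n)=0$ in Definition~\ref{def1}), the claimed recurrence
\begin{equation*}
r_{n+1}(a,b,x)=\left(abx^{2}+2\right)r_{n}(a,b,x)-r_{n-1}(a,b,x)
\end{equation*}
is equivalent, after dividing by the common factor $\sqrt{bx}$, to
\begin{equation*}
\mathcal{F}_{2n+2}(a,b,x)=\left(abx^{2}+2\right)\mathcal{F}_{2n}(a,b,x)-\mathcal{F}_{2n-2}(a,b,x).
\end{equation*}
So the whole problem reduces to establishing this three-term recurrence for the even-subsequence $\{\mathcal{F}_{2n}(a,b,x)\}$.

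First I would derive that even-subsequence recurrence directly from the defining relation~(\ref{2.1}). The computation is exactly the one already carried out inside the proof of Theorem~\ref{teo2.3}: iterating the even/odd recurrence gives $\mathcal{F}_{2i+1}=(abx^{2}+2)\mathcal{F}_{2i-1}-\mathcal{F}_{2i-3}$ for the odd indices, and the same algebra applied one step over yields $\mathcal{F}_{2i+2}=(abx^{2}+2)\mathcal{F}_{2i}-\mathcal{F}_{2i-2}$. Concretely, I would write $\mathcal{F}_{2n+2}=ax\,\mathcal{F}_{2n+1}+\mathcal{F}_{2n}$ and $\mathcal{F}_{2n+1}=bx\,\mathcal{F}_{2n}+\mathcal{F}_{2n-1}$, substitute to obtain $\mathcal{F}_{2n+2}=(abx^{2}+1)\mathcal{F}_{2n}+ax\,\mathcal{F}_{2n-1}$, then replace $ax\,\mathcal{F}_{2n-1}=\mathcal{F}_{2n}-\mathcal{F}_{2n-2}$ (from $\mathcal{F}_{2n}=ax\,\mathcal{F}_{2n-1}+\mathcal{F}_{2n-2}$) to collapse it to $(abx^{2}+2)\mathcal{F}_{2n}-\mathcal{F}_{2n-2}$. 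Multiplying through by $\sqrt{bx}$ then gives~(\ref{2.10}). Alternatively, one can quote the Binet formula of Theorem~\ref{teo2.4}: since $\alpha,\beta$ are the roots of $r^{2}-abx^{2}r-abx^{2}=0$, the even powers $\alpha^{2},\beta^{2}$ are the roots of the quadratic $s^{2}-(abx^{2}+2)s+1=0$ (because $\alpha^{2}+\beta^{2}=(abx^{2})^{2}+2abx^{2}$ and $(\alpha\beta)^{2}=(abx^{2})^{2}$, so $\alpha^{2}\beta^{2}=1$ follows from $\alpha\beta=-abx^{2}$ together with the companion check on $\alpha^{2}+\beta^{2}$), which is precisely the characteristic equation forcing the $(abx^{2}+2,-1)$ recurrence on any sequence built from $\alpha^{2n},\beta^{2n}$.

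Finally I would confirm the stated initial values. For $n=0$ we have $r_{0}=\mathcal{A}_{0}=\sqrt{bx}\,\mathcal{F}_{0}(a,b,x)$ directly from Definition~\ref{def1} with $\varepsilon(0)=0$; and $r_{1}$ should equal $\mathcal{A}_{2}=\sqrt{bx}\,\mathcal{F}_{2}(a,b,x)$, which by~(\ref{2.1}) is $\sqrt{bx}\,(ax\,\mathcal{F}_{1}+\mathcal{F}_{0})=\sqrt{bx}\,\mathcal{F}_{0}+ax\sqrt{bx}\,\mathcal{F}_{1}$, matching the stated $r_{1}$. The main obstacle, such as it is, lies entirely in the bookkeeping of the even/odd cases of $\varepsilon$ when passing between $\mathcal{A}$ and $\mathcal{F}$: one must be careful that the $\sqrt{x}\,a^{\varepsilon/2}b^{(1-\varepsilon)/2}$ weights in Definition~\ref{def1} all reduce to the single constant $\sqrt{bx}$ precisely because every index in sight is even. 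Once that normalization is checked, the recurrence is a routine consequence of the even-index relation derived above.
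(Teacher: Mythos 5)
Your main argument is correct and is essentially the paper's own proof: both reduce via Theorem~\ref{teo2} to $r_{n}=\mathcal{A}_{2n}=\sqrt{bx}\,\mathcal{F}_{2n}$ and then derive $\mathcal{F}_{2n+2}=(abx^{2}+2)\mathcal{F}_{2n}-\mathcal{F}_{2n-2}$ by the same two substitutions from (\ref{2.1}), plus the same check of the initial values. One caution about your optional Binet-formula aside: $\alpha^{2}$ and $\beta^{2}$ are \emph{not} roots of $s^{2}-(abx^{2}+2)s+1=0$, since $\alpha^{2}\beta^{2}=(abx^{2})^{2}\neq 1$ and $\alpha^{2}+\beta^{2}=(abx^{2})^{2}+2abx^{2}$; the correct normalized quantities are $\alpha^{2}/(abx^{2})$ and $\beta^{2}/(abx^{2})$ (which do satisfy that quadratic, consistent with the $(abx^{2})^{\lfloor n/2\rfloor}$ factors in Theorem~\ref{teo2.4}), so if you pursue that route you must track those factors.
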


\begin{proof}
For the matrix polynomial $\left( \mathcal{A}_{2n}\left( a,b,x\right)
\right) $, the following relation can be written%
\begin{eqnarray*}
\mathcal{A}_{2n+2}\left( a,b,x\right) &=&\sqrt{bx}\mathcal{F}_{2n+2}\left(
a,b,x\right) =\sqrt{bx}\left( ax\mathcal{F}_{2n+1}\left( a,b,x\right) +%
\mathcal{F}_{2n}\left( a,b,x\right) \right) \\
&=&\sqrt{bx}\left( ax\left( bx\mathcal{F}_{2n}\left( a,b,x\right) +\mathcal{F%
}_{2n-1}\left( a,b,x\right) \right) +\mathcal{F}_{2n}\left( a,b,x\right)
\right) \\
&=&\sqrt{bx}\left( \left( abx^{2}+1\right) \mathcal{F}_{2n}\left(
a,b,x\right) +\mathcal{F}_{2n}\left( a,b,x\right) -\mathcal{F}_{2n-2}\left(
a,b,x\right) \right) \\
&=&\left( abx^{2}+2\right) \left( \sqrt{bx}\mathcal{F}_{2n}\left(
a,b,x\right) \right) -\sqrt{bx}\mathcal{F}_{2n-2}\left( a,b,x\right) \\
&=&\left( abx^{2}+2\right) \mathcal{A}_{2n}\left( a,b,x\right) -\mathcal{A}%
_{2n-2}\left( a,b,x\right) .
\end{eqnarray*}

Therefore, from the Theorem \ref{teo2}, we find the desired result.
\end{proof}

In the following, we introduce the falling $k$-binomial transform of the
matrix polynomial $\left( \mathcal{A}_{n}\left( a,b,x\right) \right) $.

\begin{definition}
\label{def5} For $n\in \mathbb{N}$, the falling $k$-binomial transform of
the matrix polynomial $\left( \mathcal{A}_{n}\left( a,b,x\right) \right) $
is defined by%
\begin{equation}
f_{n}\left( a,b,x\right) =\overset{n}{\underset{i=0}{\dsum }}\left( 
\begin{array}{c}
n \\ 
i%
\end{array}%
\right) k^{n-i}\mathcal{A}_{i}\left( a,b,x\right)   \label{5}
\end{equation}%
where $a,b$ are nonzero real numbers$.$
\end{definition}

\vskip0.4cm

\begin{theorem}
\label{teo4} For every $n\in 
\mathbb{N}
,$ the recurrence relation for falling $k$-binomial transform of the matrix
polynomial $\left( \mathcal{A}_{n}\left( a,b,x\right) \right) $,%
\begin{equation*}
f_{n+1}\left( a,b,x\right) =3x\sqrt{ab}f_{n}\left( a,b,x\right) -\left(
2abx^{2}-1\right) f_{n-1}\left( a,b,x\right) ,
\end{equation*}%
where $f_{0}\left( a,b,x\right) =\sqrt{bx}\mathcal{F}_{0}\left( a,b,x\right) 
$ and $f_{1}\left( a,b,x\right) =bx\sqrt{ax}\mathcal{F}_{0}\left(
a,b,x\right) +\sqrt{ax}\mathcal{F}_{1}\left( a,b,x\right) .$
\end{theorem}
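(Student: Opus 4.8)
The plan is to reduce everything to a single parity-free recurrence satisfied by the normalized matrix polynomials $\mathcal{A}_n(a,b,x)$, and then to push this through the falling transform using the binomial theorem. The decisive first step is to prove the key identity
$$\mathcal{A}_{n+1}(a,b,x) = x\sqrt{ab}\,\mathcal{A}_n(a,b,x) + \mathcal{A}_{n-1}(a,b,x),$$
that is, that $\left(\mathcal{A}_n\right)$ obeys a \emph{constant-coefficient} second-order recurrence with $k = x\sqrt{ab}$. I would establish this directly from Definition \ref{def1}: writing $\mathcal{A}_n = \sqrt{x}\,a^{\varepsilon(n)/2}b^{(1-\varepsilon(n))/2}\mathcal{F}_n$ and treating the cases $n$ even and $n$ odd separately, the bi-periodic recurrence (\ref{2.1}) for $\mathcal{F}_n$ (with coefficient $ax$ or $bx$ according to parity) combines with the parity-dependent prefactor so that the factors $\sqrt{a}$ and $\sqrt{b}$ balance and both cases collapse to the single coefficient $x\sqrt{ab}$. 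Concretely $\mathcal{A}_{2m} = \sqrt{bx}\,\mathcal{F}_{2m}$ and $\mathcal{A}_{2m+1} = \sqrt{ax}\,\mathcal{F}_{2m+1}$, and substituting these into (\ref{2.1}) yields the stated identity in both parities.

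With the key identity in hand, let $\mu,\nu$ be the roots of $t^2 - kt - 1 = 0$, so that $\mu+\nu = k$ and $\mu\nu = -1$; by the recurrence there are matrix coefficients $P,Q$ (fixed by $\mathcal{A}_0,\mathcal{A}_1$) with $\mathcal{A}_i = P\mu^i + Q\nu^i$. Substituting this Binet form into Definition \ref{def5} and applying the binomial theorem would give
$$f_n = \sum_{i=0}^n \binom{n}{i}k^{n-i}\mathcal{A}_i = P\sum_{i=0}^n\binom{n}{i}k^{n-i}\mu^i + Q\sum_{i=0}^n\binom{n}{i}k^{n-i}\nu^i = P(k+\mu)^n + Q(k+\nu)^n.$$
Thus $f_n$ is again a two-term power sum, now in the shifted roots $\rho := k+\mu$ and $\sigma := k+\nu$.

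It then remains to identify the recurrence governing $\rho^n,\sigma^n$. Using $\mu+\nu = k$ and $\mu\nu = -1$ I would compute $\rho+\sigma = 2k + (\mu+\nu) = 3k$ and $\rho\sigma = k^2 + k(\mu+\nu) + \mu\nu = 2k^2 - 1$, so $\rho,\sigma$ are the roots of $t^2 - 3k\,t + (2k^2-1) = 0$. Consequently $f_{n+1} = 3k\,f_n - (2k^2-1)f_{n-1}$, and substituting $k = x\sqrt{ab}$ (so that $k^2 = abx^2$) produces exactly $f_{n+1} = 3x\sqrt{ab}\,f_n - (2abx^2-1)f_{n-1}$. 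The initial data follow by direct evaluation: $f_0 = \mathcal{A}_0 = \sqrt{bx}\,\mathcal{F}_0$ and $f_1 = k\mathcal{A}_0 + \mathcal{A}_1 = bx\sqrt{ax}\,\mathcal{F}_0 + \sqrt{ax}\,\mathcal{F}_1$, matching the claim.

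The only genuine work is the first step: the parity bookkeeping that reduces the bi-periodic recurrence of $\mathcal{F}_n$ to the clean constant-coefficient recurrence for $\mathcal{A}_n$. Once that normalization is verified, the passage through the falling transform is the standard observation that a falling $k$-binomial transform shifts each characteristic root $\lambda$ of the underlying sequence to $k+\lambda$, after which the result is elementary symmetric-function algebra. Alternatively, one could avoid the Binet form entirely and argue purely at the level of recurrences, using Pascal's rule to write $f_{n+1} = k f_n + \sum_{i=0}^n\binom{n}{i}k^{n-i}\mathcal{A}_{i+1}$ and then re-expressing the tail sum via the key identity; but the root-shift computation is cleaner and is the route I would take.
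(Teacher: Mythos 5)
Your argument is correct, but it takes a genuinely different route from the paper's. The paper proves the recurrence combinatorially: it applies Pascal's rule to split $f_{n+1}$ into a term $x\sqrt{ab}\,f_{n}$ plus a shifted sum $\sum_{i}\binom{n}{i}k^{n-i}\mathcal{A}_{i+1}(a,b,x)$, and then finishes by the same index manipulations as in part $(ii)$ of Theorem \ref{teo1} --- precisely the alternative you sketch in your final sentence (the paper's displayed intermediate identities actually contain typographical slips, e.g.\ an extraneous factor of $k$ and $x\sqrt{ab}\,\mathcal{A}_{n}$ where $x\sqrt{ab}\,f_{n}$ is meant). Your main route instead normalizes first: the identity $\mathcal{A}_{n+1}=x\sqrt{ab}\,\mathcal{A}_{n}+\mathcal{A}_{n-1}$ does hold in both parities, since $x\sqrt{ab}\cdot\sqrt{bx}=bx\sqrt{ax}$ and $x\sqrt{ab}\cdot\sqrt{ax}=ax\sqrt{bx}$, which exactly absorb the alternating coefficients $bx$ and $ax$ of (\ref{2.1}); from there the Binet form $\mathcal{A}_{i}=P\mu^{i}+Q\nu^{i}$ and the binomial theorem show that the falling transform shifts each characteristic root $\lambda$ to $k+\lambda$, and the symmetric functions $\rho+\sigma=3k$, $\rho\sigma=2k^{2}-1$ deliver the recurrence. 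This is cleaner and buys more: it explains where the coefficients $3x\sqrt{ab}$ and $2abx^{2}-1$ come from, and it yields at no extra cost the Binet formula for $f_{n}$ that the paper states separately as Theorem \ref{teo5}. The one caveat is that the Binet form needs distinct roots, i.e.\ $abx^{2}+4\neq 0$; the paper makes the same implicit assumption (it divides by $\sqrt{abx^{2}+4}$ in the constant $C$), and since the claimed recurrence is a polynomial identity in $x$ the generic case suffices. Your initial values check out after simplifying $k\mathcal{A}_{0}(a,b,x)=bx\sqrt{ax}\,\mathcal{F}_{0}(a,b,x)$.
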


\begin{proof}
Firstly, we prove that%
\begin{eqnarray*}
f_{n+1}\left( a,b,x\right) &=&\overset{n}{\underset{i=0}{\dsum }}\left( 
\begin{array}{c}
n \\ 
i%
\end{array}%
\right) \left( ab\right) ^{\frac{n+1-i}{2}}x^{n+1-i}\left( \mathcal{A}%
_{i+1}\left( a,b,x\right) +\mathcal{A}_{i}\left( a,b,x\right) \right) \\
&=&\overset{n}{\underset{i=0}{\dsum }}\left( 
\begin{array}{c}
n \\ 
i%
\end{array}%
\right) \left( ab\right) ^{\frac{n+1-i}{2}}x^{n+1-i}\mathcal{A}_{i+1}\left(
a,b,x\right) +x\sqrt{ab}\mathcal{A}_{n}\left( a,b,x\right) .
\end{eqnarray*}

Thus, similar to the $(ii)$ of Theorem \ref{teo1}, the proof can be done.
\end{proof}

\begin{theorem}
\label{teo5} For every $n\in 
\mathbb{N}
,$ the Binet formula for falling and rising $k$-binomial transform of the
matrix polynomial $\left( \mathcal{A}_{n}\left( a,b,x\right) \right) $,%
\begin{equation*}
f_{n}\left( a,b,x\right) =C\left( r_{8}^{n}(x)-r_{7}^{n}(x)\right) +\frac{%
\sqrt{bx}\mathcal{F}_{0}\left( a,b,x\right) }{2}\left(
r_{8}^{n}(x)+r_{7}^{n}(x)\right) ,
\end{equation*}%
\begin{equation*}
r_{n}\left( a,b,x\right) =C\left( r_{6}^{n}(x)-r_{5}^{n}(x)\right) +\frac{%
\sqrt{bx}\mathcal{F}_{0}\left( a,b,x\right) }{2}\left(
r_{6}^{n}(x)+r_{5}^{n}(x)\right) ,
\end{equation*}%
where $r_{5}(x)$ and $r_{6}(x)$ are roots of $r^{2}-\left( abx^{2}+2\right)
r+1=0$ and $C=\frac{bx\sqrt{ax}\mathcal{F}_{0}\left( a,b,x\right) -2\sqrt{ax}%
\mathcal{F}_{1}\left( a,b,x\right) }{2\sqrt{abx^{2}+4}}$ also $r_{7}(x)$ and 
$r_{8}(x)$ are roots of $r^{2}-3x\sqrt{ab}r+2abx^{2}-1=0.$
\end{theorem}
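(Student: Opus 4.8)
The plan is to treat each transform as a second-order linear recurrence with \emph{scalar} coefficients acting on a matrix-valued sequence, and then solve it by the characteristic-root method, exactly as one does for an ordinary Binet formula. Since the coefficients of the recurrences are scalars while the terms are $2\times 2$ matrices, the general-solution theory applies entrywise: each entry of the sequence satisfies the same scalar recurrence, so the closed form is $g_n=P\rho_2^n(x)+Q\rho_1^n(x)$ with \emph{matrix} constants $P,Q$ and scalar roots $\rho_1,\rho_2$.

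For the rising $k$-binomial transform I would start from the recurrence $r_{n+1}=(abx^2+2)r_n-r_{n-1}$ established in Theorem~\ref{teo3}, whose characteristic equation is $r^2-(abx^2+2)r+1=0$ with roots $r_5(x),r_6(x)$. I would write the general solution in symmetric/antisymmetric form $r_n=\lambda\big(r_6^n(x)-r_5^n(x)\big)+\mu\big(r_6^n(x)+r_5^n(x)\big)$, which is just a relabelling of $Pr_6^n+Qr_5^n$ via $P=\mu+\lambda$, $Q=\mu-\lambda$. Evaluating at $n=0$ gives $r_0=2\mu$, so $\mu=\tfrac12\sqrt{bx}\,\mathcal{F}_0(a,b,x)$ by the initial value in Theorem~\ref{teo3}. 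Evaluating at $n=1$ gives $r_1=\mu(r_5+r_6)+\lambda(r_6-r_5)$; substituting the Vieta relations $r_5+r_6=abx^2+2$ and $r_6-r_5=\sqrt{(abx^2+2)^2-4}=x\sqrt{ab}\,\sqrt{abx^2+4}$, together with $r_1=\sqrt{bx}\,\mathcal{F}_0+ax\sqrt{bx}\,\mathcal{F}_1$, yields one equation for $\lambda$ that collapses, using $\sqrt{ab}\sqrt{ax}=a\sqrt{bx}$, to $\lambda=C$.

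The falling $k$-binomial transform is handled identically, starting instead from the recurrence $f_{n+1}=3x\sqrt{ab}\,f_n-(2abx^2-1)f_{n-1}$ of Theorem~\ref{teo4}, whose characteristic equation $r^2-3x\sqrt{ab}\,r+(2abx^2-1)=0$ has roots $r_7(x),r_8(x)$ with $r_7+r_8=3x\sqrt{ab}$ and discriminant $9abx^2-4(2abx^2-1)=abx^2+4$, so $r_8-r_7=\sqrt{abx^2+4}$. Writing $f_n=\lambda\big(r_8^n(x)-r_7^n(x)\big)+\mu\big(r_8^n(x)+r_7^n(x)\big)$ and matching the initial values $f_0=\sqrt{bx}\,\mathcal{F}_0$ and $f_1=bx\sqrt{ax}\,\mathcal{F}_0+\sqrt{ax}\,\mathcal{F}_1$ of Theorem~\ref{teo4} again forces $\mu=\tfrac12\sqrt{bx}\,\mathcal{F}_0(a,b,x)$ and, after using $\sqrt{bx}\sqrt{ab}=b\sqrt{ax}$, the same constant $\lambda=C$. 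A convenient cross-check for the rising case is Theorem~\ref{teo2}, which identifies $r_n=\mathcal{A}_{2n}=\sqrt{bx}\,\mathcal{F}_{2n}$; feeding $n\mapsto 2n$ into the Binet formula of Theorem~\ref{teo2.4} and using $r_{5,6}(x)=\alpha^2/(abx^2),\ \beta^2/(abx^2)$ reproduces the claimed expression.

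The computations are routine once the roots are pinned down, so I do not anticipate a genuine obstacle; the one place that needs care is the square-root bookkeeping in the $n=1$ step, where the half-integer powers of $a,b,x$ coming from the definition $\mathcal{A}_n=\sqrt{x}\,a^{\varepsilon(n)/2}b^{(1-\varepsilon(n))/2}\mathcal{F}_n$ must be combined consistently (e.g. $\sqrt{ab}\sqrt{ax}=a\sqrt{bx}$ and $\sqrt{bx}\sqrt{ab}=b\sqrt{ax}$) so that the denominator factor $\sqrt{abx^2+4}$ in $C$ cancels exactly against the radical appearing in $r_6-r_5$ (respectively $r_8-r_7$). I would verify the sign of $C$ carefully at this step, since it is fixed entirely by the $n=1$ initial condition.
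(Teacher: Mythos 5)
Your proposal is correct and takes essentially the same route as the paper's own (one-line) proof: the Binet formulas are obtained by solving the second-order recurrences of Theorems \ref{teo3} and \ref{teo4} entrywise via the characteristic-root method and fixing the matrix constants from the stated initial conditions, which is exactly the computation you carry out. The only point of divergence is the sign of $C$, which your $n=1$ step yields as $\frac{2\sqrt{ax}\,\mathcal{F}_{1}\left( a,b,x\right) -bx\sqrt{ax}\,\mathcal{F}_{0}\left( a,b,x\right) }{2\sqrt{abx^{2}+4}}$ unless the roots are labelled with $r_{5}>r_{6}$ (resp.\ $r_{7}>r_{8}$) --- a labelling the paper leaves unspecified --- and you correctly identify this as the one step requiring care.
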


\begin{proof}
Using the initial conditions, the Theorem can be proved.
\end{proof}

\section*{Conclusion}

\qquad In this paper, we define the matrix polynomial and give new
equalities for it. Then, defining the transforms for this matrix polynomial,
we get some properties of this transforms. Thus, it is obtained a new
genaralization for the polynomials, matrix sequences and number sequences
that have the similar recurrence relation in the literature. That is,

\begin{itemize}
\item If we take $a=b=1$ in Section 2, we get the some properties of the
Fibonacci polynomial.

\item If we take $a=b=2$ in Section 2, we get the some properties of the
Pell polynomial.

\item If we take $a=b=k$ in Section 2, we get the some properties of the $k$%
-Fibonacci polynomial.
\end{itemize}

If we choose $x=1$ in Section 3, then we obtain some properties for binomial
transforms of bi-periodic Fibonacci matrix sequence and bi-periodic
Fibonacci numbers.

Also, for different values of $a$ and $b,$ we obtain the some properties of
binomial transforms of the well-known matrix sequence and number sequence in
the literature:

\begin{itemize}
\item If we choose $a=b=1$, we obtain the some properties for binomial
transforms of Fibonacci matrix sequence and Fibonacci numbers.

\item If we choose $a=b=2$, we obtain the some properties for binomial
transforms of Pell matrix sequence and Pell numbers.

\item If we choose $a=b=k$, we obtain the some properties for binomial
transforms of $k$-Fibonacci matrix sequence and $k$-Fibonacci numbers.
\end{itemize}

\end{document}